\pretocmd{\chapter}{\addtocontents{toc}{\protect\addvspace{15\p@}}}{}{}
\pretocmd{\section}{\addtocontents{toc}{\protect\addvspace{3\p@}}}{}{}
\def\@tocline#1#2#3#4#5#6#7{\relax
  \ifnum #1>\c@tocdepth 
  \else
    \par \addpenalty\@secpenalty\addvspace{#2}%
    \begingroup \hyphenpenalty\@M
    \@ifempty{#4}{%
      \@tempdima\csname r@tocindent\number#1\endcsname\relax
    }{%
      \@tempdima#4\relax
    }%
    \parindent\z@ \leftskip#3\relax \advance\leftskip\@tempdima\relax
    \rightskip\@pnumwidth plus4em \parfillskip-\@pnumwidth
    #5\leavevmode\hskip-\@tempdima
      \ifcase #1
       \or\or \hskip .5em \or \hskip 1em \else \hskip 1.5em \fi%
      #6\nobreak\relax
    \dotfill\hbox to\@pnumwidth{\@tocpagenum{#7}}\par
    \nobreak
    \endgroup
  \fi}
\newcommand{\C}{\mathbb{C}}
\newcommand{\Z}{\mathbb{Z}}
\newcommand{\Q}{\mathbb{Q}}
\newcommand{\F}{\mathbb{F}}
\newcommand{\A}{\mathbb{A}}
\newcommand{\cG}{\mathcal{G}}
\newcommand{\cH}{\mathcal{H}}
\newcommand{\Gal}{\operatorname{Gal}}
\renewcommand{\sc}{\operatorname{sc}}
\renewcommand{\ss}{\operatorname{ss}}
\newcommand{\der}{\operatorname{der}}
\newcommand{\End}{\operatorname{End}}
\newcommand{\SO}{\mathrm{SO}}
\newcommand{\GL}{\mathrm{GL}}
\newcommand{\SL}{\mathrm{SL}}
\newcommand{\Sp}{\mathrm{Sp}}
\newcommand{\GSp}{\mathrm{GSp}}
\newcommand{\GO}{\mathrm{GO}}
\def\bM{\mathbf{M}}
\def\bG{\mathbf{G}}
\def\bH{\mathbf{H}}
\def\bT{\mathbf{T}}
\newcommand\uM{\underline{M}}
\newcommand\uG{\underline{G}}
\newtheorem{thm}{Theorem}[section]
\newtheorem{cor}[thm]{Corollary}
\newtheorem{prop}[thm]{Proposition}
\newtheorem{defi}[thm]{Definition}
\begin{document}

\title[]{Monodromy of four dimensional irreducible compatible systems of $\Q$}

\author{Chun Yin Hui}
\email{chhui@maths.hku.hk, pslnfq@gmail.com}
\address{
Department of Mathematics\\
The University of Hong Kong\\
Pokfulam, Hong Kong
}

\thanks{Mathematics Subject Classification (2010): 11F80, 11F70, 11F22, 20G05.}
\maketitle

\begin{center}
\textit{In honor of Professor Michael Larsen's 60th birthday}
\end{center}

\begin{abstract}
Let $F$ be a totally real field and $n\leq 4$ a natural number.
We study the monodromy groups of any
$n$-dimensional strictly compatible system $\{\rho_\lambda\}_\lambda$
of $\lambda$-adic representations of $F$ with distinct Hodge-Tate numbers
such that $\rho_{\lambda_0}$ is irreducible for some $\lambda_0$.
When $F=\Q$, $n=4$, and $\rho_{\lambda_0}$ is fully 
symplectic, the following assertions are obtained. 
\begin{enumerate}[(i)]
\item  The representation $\rho_\lambda$ is fully symplectic for almost all $\lambda$.
\item If in addition the similitude character $\mu_{\lambda_0}$ of $\rho_{\lambda_0}$ is odd, then 
the system $\{\rho_\lambda\}_\lambda$ is potentially automorphic and the 
residual image $\bar\rho_\lambda(\Gal_\Q)$ has a subgroup conjugate to $\Sp_4(\F_\ell)$ for almost all $\lambda$.
\end{enumerate}
\end{abstract}

\section{Introduction}\label{s1}
\subsection{Main results}\label{mr}
Let $K$, $E$ be number fields, and $\{\rho_\lambda\}_\lambda$
an $E$-rational compatible system of semisimple $n$-dimensional 
$\lambda$-adic representations of $K$ \emph{in the sense of Serre} (Definition \ref{Sercs}).
Denote by $\Gamma_\lambda$ the \emph{monodromy} (i.e., image) of $\rho_\lambda$ 
and by $\bG_\lambda$ the \emph{algebraic monodromy group} of $\rho_\lambda$\footnote{The Zariski closure of the monodromy $\Gamma_\lambda$ in $\GL_{n,E_\lambda}$.}. 
A fundamental problem about compatible systems of $\lambda$-adic Galois representations
concerns whether the absolute irreducibility of $\rho_\lambda$ is independent of $\lambda$,
and furthermore, whether the residual representations $\bar\rho_\lambda$ are absolutely irreducible
for almost all $\lambda$ (i.e., all but finitely many $\lambda$) if some member 
of $\{\rho_\lambda\}_\lambda$ is absolutely irreducible,
to which establishing big images $\Gamma_\lambda$ 
 for all (or almost all) $\lambda$ is pivotal.
Two impetuses of the problem are the \emph{Mumford-Tate conjecture}
that concerns the $\lambda$-independence of $\bG_\lambda^\circ$ for motivic compatible systems
and the \emph{irreducibility conjecture}  
that predicts the absolute irreducibility of $\rho_\lambda$
for compatible systems attached to algebraic cuspidal automorphic forms of $\GL_n(\A_K)$.
There have been many studies on the irreducibility and big images of 
compatible system since 70s, 
assuming that $\{\rho_\lambda\}_\lambda$ is 
\begin{itemize} 
\item motivic, see \cite{Se72,Se85},\cite{Ri76} for abelian varieties, \cite{DV04},\cite{DW21} for $n=3$, and \cite{DV08,DV11} for $n=4$,
\item automorphic, see \cite{Ri77,Ri85},\cite{Ta95},\cite{Dim05} for (Hilbert) modular forms, \cite{BR92},\cite{DV04} for $n=3$, \cite{Die02b,Die07},\cite{Ra13},\cite{DZ20},\cite{We22} for $n=4$, \cite{CG13},\cite{Xi19},\cite{Hu22} for $n\leq 6$, and \cite{BLGGT14} in general,
\end{itemize}
and more recently, 
\begin{itemize}
\item a weakly compatible system (Definition \ref{scs}), see 
\cite{BLGGT14},\cite{PT15},\cite{LY16},\cite{PSW18}, and \cite{DW21}.
\end{itemize}

\noindent In particular in the recent work \cite{Hu22},
when $K=F$ is totally real or CM, $n\leq 6$, and
\begin{equation}\label{sc1}
\{\rho_\lambda:\Gal_F\to\GL_n(\overline{E}_\lambda)\}_\lambda
\end{equation}
is the strictly compatible system (Definition \ref{scs})
attached to a regular algebraic, polarized, cuspidal automorphic 
representation of $\GL_n(\A_F)$, we proved (\cite[Theorem 1.4]{Hu22}) that 
\begin{enumerate}[(I)]
\item $\rho_\lambda$ is irreducible  for almost all $\lambda$ and
\item $\rho_\lambda$ is residually irreducible for almost all $\lambda$ if in addition $F=\Q$
\end{enumerate} 
by combining various Galois theoretic and potential automorphy results.
The innovation of \cite{Hu22} is the development of some big images results for 
subrepresentations of the compatible system $\{\rho_\lambda\}_\lambda$ 
inspired by previous works \cite{Hu15} and \cite{HL16,HL20} 
(see Theorem \ref{general} and Proposition \ref{use1}).
The strategy of the irreducibility result (I), roughly speaking, is that if there exists an infinite set $\mathcal L$ such that $\rho_\lambda$ is reducible whenever $\lambda\in\mathcal L$,
then some low dimensional subrepresentations of $\rho_\lambda$ 
(or representation constructed from $\rho_\lambda$)
for some $\lambda\in\mathcal L$ have big images
so that the potential automorphy theorems in \cite{BLGGT14} can be applied
 to draw a contradiction to the irreducibility of a member of $\{\rho_\lambda\}_\lambda$ 
due to Patrikis-Taylor \cite{PT15}. The residual irreducibility result (II) 
requires an extra input from Serre's modularity conjecture \cite{Se87} 
(proven in \cite{KW09a,KW09b}).
In this article, we use these ideas to study the monodromy
of some low dimensional strictly compatible systems of totally real field $F$.
Given an algebraic extension $E_\ell$ of $\Q_\ell$, 
the algebraic monodromy group $\bG_\ell$ of an $\ell$-adic Galois representation $\sigma_\ell:\Gal_K\to\GL_n(E_\ell)$ is the $E_\ell$-subgroup of $\GL_{n,E_\ell}$ defined as the Zariski closure of
the monodromy $\sigma_\ell(\Gal_K)$ in $\GL_{n,E_\ell}$.

\begin{thm}\label{mt1}
Suppose $F$ is a totally real field, $n\leq 4$, and  $\{\rho_\lambda:\Gal_F\to\GL_n(\overline{E}_\lambda)\}_\lambda$ is a strictly compatible system of $F$ with distinct $\tau$-Hodge-Tate numbers for each $\tau:F\to\overline{E}_\lambda$. Let $\bG_\lambda$ be the algebraic monodromy of $\rho_\lambda$.
If $\rho_{\lambda_0}$ 
is irreducible for some $\lambda_0$, then the following assertions hold.
\begin{enumerate}[(i)]
\item When $n\leq 3$, the representation $\rho_\lambda$ is irreducible for all $\lambda$.
\item When $n\leq 3$, the identity component $\bG_\lambda^\circ\subset\GL_{n,\overline{E}_\lambda}$ is independent of $\lambda$.
\item When $n\leq 3$, the representation $\rho_\lambda$ is residually irreducible for almost all $\lambda$. 
\item When $n=4$, the representation $\rho_\lambda$ is irreducible for almost all  $\lambda$. 
\end{enumerate}
\end{thm}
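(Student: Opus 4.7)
The plan is to follow the strategy of \cite{Hu22}: deduce irreducibility of $\rho_\lambda$ by combining the big-image results (Theorem \ref{general} and Proposition \ref{use1}) on low-dimensional subrepresentations with the potential automorphy theorems of \cite{BLGGT14} and the irreducibility theorem of Patrikis-Taylor \cite{PT15}. The main point is that $n\leq 4$ keeps the possible decomposition types of a reducible $\rho_\lambda$ within reach of these tools.

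To prove (i) and (iv), I would argue by contradiction. Suppose that there is an infinite set $\mathcal{L}$ of places with $\rho_\lambda$ reducible for every $\lambda\in\mathcal{L}$. For each such $\lambda$ decompose $\rho_\lambda$ into irreducible summands, each of dimension $\leq 3$. By Theorem \ref{general} and Proposition \ref{use1}, a suitable summand $\sigma_\lambda$ (or a construction from it, e.g.\ a Pl\"ucker or tensor-induced representation) has algebraic monodromy large enough that \cite{BLGGT14} places $\sigma_\lambda$ in a potentially automorphic compatible system $\{\sigma_{\lambda'}\}_{\lambda'}$. Patrikis-Taylor \cite{PT15} then forces every $\sigma_{\lambda'}$ to be irreducible. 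By strict compatibility of $\{\rho_\lambda\}_\lambda$, $\det(1-\rho_{\lambda_0}(\mathrm{Frob}_v)T)$ is divisible by $\det(1-\sigma_{\lambda_0}(\mathrm{Frob}_v)T)$ for almost all unramified $v$; because $\rho_{\lambda_0}$ is semisimple, this exhibits $\sigma_{\lambda_0}$ as a proper subrepresentation and contradicts the irreducibility of $\rho_{\lambda_0}$.

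For (ii), once (i) holds, irreducibility of $\rho_\lambda$ restricts $\bG_\lambda^\circ$ to a short list of reductive subgroups of $\GL_{n,\overline{E}_\lambda}$ in dimensions $n\leq 3$; strict compatibility of the Frobenius characteristic polynomials then pins $\bG_\lambda^\circ$ down up to conjugacy independently of $\lambda$. For (iii), with (i) and potential automorphy of $\{\rho_\lambda\}_\lambda$ in hand, I would invoke the residual irreducibility results known in the automorphic setting in low dimensions (the Hilbert-modular case for $n=2$, and the Blasius-Rogawski / Dieulefait-Vila analysis for $n=3$), aided by Serre's modularity conjecture when $F=\Q$.

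The main obstacle lies in (iv), specifically in the $2+2$ decomposition $\rho_\lambda=\sigma_\lambda\oplus\sigma'_\lambda$, since neither two-dimensional summand need have large image on its own. Here my plan is to use that $\rho_\lambda$ inherits a polarizable structure from the Hodge-Tate hypothesis, and to link $\sigma_\lambda$ and $\sigma'_\lambda$ via strict compatibility of $\{\rho_\lambda\}_\lambda$ so as to extract a four-dimensional big-image subsystem from the pair; the distinctness of the Hodge-Tate numbers will be essential for ruling out accidental isomorphisms between the two summands across varying $\lambda$. The decomposition types $3+1$ and $2+1+1$ are comparatively routine, since the $3$-dimensional summand (respectively, a $2$-dimensional summand in an ambient representation with four distinct Hodge-Tate weights) feeds directly into the big-image and potential automorphy machinery.
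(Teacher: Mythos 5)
Your overall strategy (big residual images for low-dimensional summands, then BLGGT potential automorphy, then a compatibility contradiction with $\rho_{\lambda_0}$) is indeed part of the paper's argument, but as written it has a central gap: you never control \emph{which} decompositions of a reducible $\rho_\lambda$ can actually occur, and without that control the machinery you invoke does not apply. The paper's key tool here is Theorem \ref{hui1}, the $\lambda$-independence of the formal bi-character of the algebraic monodromy, used after splitting into the Lie-irreducible and induced cases. In the Lie-irreducible case it forces the only possible decompositions to be $2+1$ (for $n=3$ with $\bG_{\lambda_0}^{\der}=\SO_3$) or $2+2$ (for $n=4$ with $\bG_{\lambda_0}^{\der}=\SO_4$ or $\Sp_4$), with each $2$-dimensional summand automatically having $\SL_2$ derived monodromy, so that Proposition \ref{use2} (via Proposition \ref{odd} and Theorem \ref{general}(iv)) applies; in the remaining Lie-irreducible cases ($\SL_2$ via $\mathrm{Sym}^3$, $\SL_3$, $\SL_4$) the formal character alone already rules out reducibility at every $\lambda$. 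Your version ("a suitable summand has algebraic monodromy large enough") is exactly what fails without this: a $3$-dimensional summand with $\SL_3$ monodromy is not essentially self-dual, so Theorem \ref{pauto}(2) is unavailable, and a $2$-dimensional summand could a priori have abelian or dihedral image, killing both \ref{pauto}(4) and \ref{odd}(2) --- this is precisely the $2+2$ worry you flag at the end, and your proposed fix ("extract a four-dimensional big-image subsystem from the pair") is not an argument. Moreover you never treat the case where $\rho_{\lambda_0}$ itself is \emph{not} Lie-irreducible (CM/induced type), where the automorphy machinery is irrelevant; the paper handles it by extending the inducing character to a compatible system and applying Mackey's criterion with the distinct $\tau$-Hodge--Tate numbers, and, for $n=4$ induced from a $2$-dimensional representation, by the self-twist relation $\rho_\lambda\cong\rho_\lambda\otimes\eta$ combined again with regularity.

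Three further points. First, an argument by contradiction over an infinite set $\mathcal L$ can only give irreducibility for almost all $\lambda$, while (i) asserts it for \emph{all} $\lambda$; the paper gets "all" from the formal-character rigidity in the $\SL_2,\SL_3$ cases, from Mackey in the induced cases, and in the $\SO_3$ case by upgrading potential automorphy to $\rho_\lambda(\Gal_{F'})\subset\GO_3$ for every $\lambda$ and then reapplying Theorem \ref{hui1}. Also, divisibility of Frobenius characteristic polynomials does not by itself exhibit $\sigma_{\lambda_0}$ as a subrepresentation; one must place the complementary summands in compatible systems as well and then compare the two semisimple systems via Chebotarev and Brauer--Nesbitt (no appeal to Patrikis--Taylor is needed at this point). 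Second, for (iii) your route through Hilbert modular forms, Dieulefait--Vila and Serre's conjecture is both unnecessary and insufficient: those inputs give density-one sets of primes or require $F=\Q$ (Serre's conjecture enters only in Theorem \ref{mt2}(ii)), whereas the claim is residual irreducibility for almost all $\lambda$ over any totally real $F$; the paper simply notes that for $n\leq 3$ every $\rho_\lambda$ is of type A and applies Proposition \ref{use1} with Theorem \ref{general}(v) and part (i). Third, your sketch of (ii) ("strict compatibility pins down $\bG_\lambda^\circ$") again needs Theorem \ref{hui1}: it is the $\lambda$-independence of the formal bi-character, combined with the $\lambda$-independence of $\bG_\lambda^{\der}$ established in the proof of (i), that yields the $\lambda$-independence of the identity component.
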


When $F=\Q$ and $n=4$, we have a better description of the monodromy by Serre's modularity conjecture. If $\bG_\lambda\subset\GO_n$ (resp. $\GSp_n$) with respect to a non-degenerate symmetric (resp. skew-symmetric) 
 pairing $\left\langle~,~\right\rangle$ on the representation space of $\rho_\lambda$, 
there is a similitude character $\mu_\lambda$ such that $\rho_\lambda\cong \rho_\lambda^\vee\otimes\mu_\lambda$.
The character $\mu_\lambda$ is said to be \emph{odd} (resp. \emph{even}) if $\mu_\lambda(c)=-1$ (resp. $1$)
where $c\in\Gal_\Q$ is a complex multiplication.
Denote by $\bG_\lambda^{\der}$ the derived group of the identity component $\bG_\lambda^\circ$.
The representation $\rho_\lambda$ is said to be
\begin{itemize}
\item \emph{fully orthogonal} if $\bG_\lambda^{\der}=\SO_n$ with $\left\langle~,~\right\rangle$ symmetric;
\item \emph{fully symplectic}  if $\bG_\lambda^{\der}=\Sp_n$ with $\left\langle~,~\right\rangle$ skew-symmetric.
\end{itemize} 
Below is our main result on four dimensional fully symplectic Galois representations 
of $\Q$; see \cite{LY16} for four dimensional fully orthogonal Galois representations.

\begin{thm}\label{mt2}
Suppose $\{\rho_\lambda:\Gal_\Q\to\GL_4(\overline{E}_\lambda)\}_\lambda$ is a strictly compatible system of $\Q$ with distinct Hodge-Tate numbers. If $\rho_{\lambda_0}$ is fully symplectic\footnote{With respect to a non-degenerate skew-symmetric pairing on the representation space of $\rho_{\lambda_0}$.} for some $\lambda_0$, then 
the following assertions hold.
\begin{enumerate}[(i)]
\item The representation $\rho_\lambda$ is fully symplectic for almost all $\lambda$.
\item If in addition the similitude character $\mu_{\lambda_0}$ is odd, then $\{\rho_\lambda\}_\lambda$ is potentially automorphic and 
the residual image $\bar\rho_\lambda(\Gal_\Q)$ has a subgroup conjugate to $\Sp_4(\F_\ell)$ for almost all $\lambda$.
\end{enumerate}
\end{thm}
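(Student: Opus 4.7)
My plan is to combine the irreducibility result Theorem \ref{mt1}(iv) with the $\lambda$-independence of coarse invariants (rank, formal character) of the algebraic monodromy groups in a strictly compatible system. By Theorem \ref{mt1}(iv), $\rho_\lambda$ is irreducible for almost all $\lambda$, so $\bG_\lambda$ acts irreducibly on $\overline{E}_\lambda^{\,4}$. The compatibility of characteristic polynomials of Frobenius elements implies (via Chebotarev together with arguments of Serre and Larsen-Pink) that the rank of $\bG_\lambda^\circ$ and the formal character of the tautological representation are $\lambda$-independent; at $\lambda_0$ these coincide with the invariants of $\GSp_4$ in its standard $4$-dimensional representation. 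Applying Clifford theory to the finite-index normal subgroup $\bG_\lambda^\circ$, and then classifying connected reductive rank-$3$ subgroups of $\GL_4$ of semisimple rank $2$ acting irreducibly with the symplectic-type formal character (which rules out the orthogonal competitor $(\SL_2\times\SL_2)/\{\pm 1\}$), I would force $\bG_\lambda^{\der}=\Sp_4$ for almost all $\lambda$, i.e., $\rho_\lambda$ is fully symplectic.

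\textbf{Plan for part (ii), potential automorphy.} Assuming (i), I would first observe that $\{\mu_\lambda\}_\lambda$ is itself a compatible system of one-dimensional representations, so the value $\mu_\lambda(c)\in\{\pm1\}$ is $\lambda$-independent, and oddness at $\lambda_0$ propagates to oddness of $\mu_\lambda$ for almost all $\lambda$. Combined with (i), the distinct Hodge-Tate numbers hypothesis, and the resulting regular, fully symplectic, odd-polarized structure, the potential automorphy theorems of Patrikis-Taylor \cite{PT15} and Barnet-Lamb-Gee-Geraghty-Taylor \cite{BLGGT14} then apply to $\{\rho_\lambda\}_\lambda$, delivering potential automorphy of the system.

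\textbf{Residual image and main obstacle.} For the claim that $\bar\rho_\lambda(\Gal_\Q)$ contains a conjugate of $\Sp_4(\F_\ell)$ for almost all $\lambda$, the strategy is to apply the big-image machinery of Theorem \ref{general} and Proposition \ref{use1}: since (i) gives $\bG_\lambda^{\der}=\Sp_4$ for almost all $\lambda$, these tools should promote $\ell$-adic bigness to residual bigness, producing for almost all $\lambda$ a subgroup of $\bar\rho_\lambda(\Gal_\Q)$ conjugate to the $\F_\ell$-points of the mod-$\ell$ reduction of $\Sp_4$. The main obstacle, which I expect to be the hardest step, is ruling out the proper maximal subgroups of $\Sp_4(\F_\ell)$ (Aschbacher classes: reducible, imprimitive, tensor-product, and subfield subgroups) that a priori could contain the residual image. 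As in the strategy of \cite{Hu22}, where Serre's modularity conjecture supplied the analogous residual irreducibility input in low dimensions, here the needed rigidity should come from the potential automorphy established in the previous step together with modularity-type inputs for the associated Siegel-type automorphic form on $\GSp_4$, which exclude these degenerations for almost all $\lambda$ and yield the desired embedding of $\Sp_4(\F_\ell)$.
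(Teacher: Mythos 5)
Your plan for part (i) breaks down at the decisive step. The formal (bi-)character does \emph{not} rule out the orthogonal competitor: $\Sp_4$ and $\SO_4\cong(\SL_2\times\SL_2)/\{\pm1\}$ have identical formal characters in $\GL_4$ (both rank~$2$, torus $\diag(x,1/x,y,1/y)$), and likewise $\GSp_4$ and $\GO_4^\circ$ have the same rank-$3$ formal character; the paper notes this explicitly in the table for Theorem~\ref{mt1}(iv), where cases (b) and (c) are listed with the same formal character. So after Theorem~\ref{mt1}(iv) and Theorem~\ref{hui1} you only know $\bG_\lambda\in\{\GSp_4,\GO_4^\circ\}$ for almost all $\lambda$, and no classification of reductive subgroups with the given formal character can separate the two. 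The paper eliminates the $\GO_4^\circ$ case by a genuinely automorphic argument: in that case $\rho_\lambda\otimes\rho_\lambda^\vee$ has a $3$-dimensional irreducible factor $W_\lambda$ with monodromy $\SO_3$ and distinct Hodge--Tate numbers (via a Hodge--Tate lift through $\GL_2\times\GL_2$); if this happened for infinitely many $\lambda$, Proposition~\ref{use2}(b) makes some $W_{\lambda_1}$ part of a strictly compatible system $\{\phi_\lambda\}_\lambda$, and comparing the semisimple rank of the monodromy of $\{\phi_\lambda\oplus(\rho_\lambda\otimes\rho_\lambda^\vee)\}_\lambda$ at $\lambda_1$ (rank $2$) with that at $\lambda_0$ (rank $3$, by $\bG_{\lambda_0}=\GSp_4$ and Goursat) contradicts Theorem~\ref{hui1}. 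This step is the heart of (i) and is missing from your proposal.

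Your plan for part (ii) is circular and relies on an unavailable input. Theorem~\ref{pauto} requires as hypothesis (4) that $\bar\rho_\lambda|_{\Gal_{\Q(\zeta_\ell)}}$ be residually irreducible, so you cannot first ``deliver potential automorphy of the system'' and only afterwards attack the residual image; and your proposed route to the residual image (ruling out Aschbacher-class subgroups of $\Sp_4(\F_\ell)$ using modularity-type inputs for the associated $\GSp_4$ Siegel form) is both vague and unnecessary, and is not how the paper proceeds. The paper first proves residual irreducibility: by Theorem~\ref{general} and Proposition~\ref{use1} the algebraic envelope $\uG_\lambda$ is either $\GSp_4$ or $\mathbb{G}_m(\SL_2\times\SL_2)$ acting reducibly; if the latter occurred infinitely often, the $2$-dimensional residual constituents are odd and irreducible, Serre's modularity conjecture (with weight/level control coming from strict compatibility) attaches a fixed eigenform $f$ to them for infinitely many $\lambda$, and comparing the formal bi-characters of the $6$-dimensional system $(\rho_\lambda\otimes\epsilon_\ell^m)\oplus\psi_{\lambda,f}$ with its envelopes ($\Sp_4\times\SL_2$ of rank $3$ versus $\SL_2\times\SL_2$ of rank $2$) contradicts Theorem~\ref{general}(iii). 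A separate argument (induction from a fixed quadratic field forcing half-density vanishing of traces, contradicting $\bG_{\lambda_0}=\GSp_4$) excludes the residually induced case, so $\uG_\lambda=\GSp_4$ for almost all $\lambda$; this verifies \ref{pauto}(4) and only then yields potential automorphy. Finally, the $\Sp_4(\F_\ell)$ subgroup is extracted from the envelope machinery (bounded index of $\bar\rho_\ell^{\ss}(\Gal_\Q)$ in $\uG_\ell^{\der}(\F_\ell)$ plus Steinberg's generation of $\uG_\ell^{\sc}(\F_\ell)$ by order-$\ell$ elements), not from automorphy of a Siegel form. Your propagation of oddness of $\mu_\lambda$ by compatibility is fine, but the core mechanisms of both (i) and (ii) are absent or misordered in your proposal.
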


When the strictly compatible system $\{\rho_\lambda\}_\lambda$ is also a Serre compatible system defined over $E=\Q$, the prime $\lambda$ is a rational prime $\ell$, the algebraic monodromy group $\bG_\ell$ is defined over $\Q_\ell$, and the monodromy group $\Gamma_\ell$ is a compact open subgroup of $\bG_\ell(\Q_\ell)$. In \cite{Lar95}, Larsen conjectured 
that for any motivic compatible system
the subgroup $\Gamma_\ell^{\sc}\subset\bG_\ell^{\sc}(\Q_\ell)$ (Definition \ref{gscd}) 
is \emph{hyperspecial maximal compact} (see \cite{Ti79})
when $\ell$ is sufficiently large. We obtain the corollary below.

\begin{cor}\label{mc}
Suppose $\{\rho_\ell:\Gal_\Q\to\GL_4(\Q_\ell)\}_\ell$ is a strictly compatible system of $\Q$
with distinct Hodge-Tate numbers.
If $\rho_{\ell_0}$ is fully symplectic with odd similitude character 
$\mu_{\ell_0}$ for some $\ell_0$, 
then $\Gamma_\ell^{\sc}$ is a hyperspecial maximal compact subgroup of 
$\bG_\ell^{\sc}(\Q_\ell)=\bG_\ell^{\der}(\Q_\ell)\cong\Sp_4(\Q_\ell)$ (that is, $\Gamma_\ell^{\sc}\cong\Sp_4(\Z_\ell)$)
for all $\ell\gg0$.
\end{cor}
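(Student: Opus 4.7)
The plan is to combine Theorem \ref{mt2} with standard Bruhat-Tits theory for $\Sp_4(\Q_\ell)$ and a Frattini-style argument at the residue. By Theorem \ref{mt2}(i), for almost all $\ell$ one has $\bG_\ell^{\der} = \Sp_4$; since $\Sp_4$ is simply connected, $\bG_\ell^{\sc} = \bG_\ell^{\der} = \Sp_4$, and $\Gamma_\ell^{\sc}$ is a compact subgroup of $\Sp_4(\Q_\ell)$. By Theorem \ref{mt2}(ii), the residual image $\bar\rho_\ell(\Gal_\Q)$ contains a subgroup conjugate to $\Sp_4(\F_\ell)$ for almost all $\ell$; as $\Sp_4(\F_\ell)$ is perfect for $\ell \geq 3$, this copy is contained in the reduction of $\Gamma_\ell^{\sc}$ modulo any $\Gamma_\ell$-stable $\Z_\ell$-lattice.

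Next, I would pin down the parahoric of $\Sp_4(\Q_\ell)$ containing $\Gamma_\ell^{\sc}$. Choosing a $\Gamma_\ell$-stable lattice $L \subset \Q_\ell^4$ on which the symplectic pairing is non-degenerate modulo $\ell$ (possible for $\ell \gg 0$), the ambient compact subgroup $\Sp_4(\Q_\ell) \cap \GL(L)$ is hyperspecial, i.e.\ conjugate in $\Sp_4(\Q_\ell)$ to $\Sp_4(\Z_\ell)$. Equivalently, the only conjugacy class of maximal parahorics of $\Sp_4(\Q_\ell)$ whose reductive quotient over $\F_\ell$ can accommodate a copy of $\Sp_4(\F_\ell)$ is the hyperspecial class: the other maximal parahoric (the middle vertex of the local Dynkin diagram $\widetilde C_2$) has reductive quotient $(\SL_2 \times \SL_2)(\F_\ell)$, which is strictly too small. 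After conjugating we may therefore assume $\Gamma_\ell^{\sc} \subseteq \Sp_4(\Z_\ell)$ with surjective mod-$\ell$ reduction $\Gamma_\ell^{\sc} \twoheadrightarrow \Sp_4(\F_\ell)$.

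Finally, I would invoke the classical fact that for $\ell \geq 5$ any closed subgroup of $\Sp_4(\Z_\ell)$ whose image modulo $\ell$ equals $\Sp_4(\F_\ell)$ must itself equal $\Sp_4(\Z_\ell)$. This is a Frattini-style argument: the principal congruence kernel is pro-$\ell$ with successive graded pieces isomorphic as $\Sp_4(\F_\ell)$-modules to the adjoint representation $\mathfrak{sp}_4(\F_\ell)$, which is irreducible for $\ell \geq 5$, and the relevant first cohomology of $\Sp_4(\F_\ell)$ in $\mathfrak{sp}_4(\F_\ell)$ vanishes for $\ell$ large. An induction along the congruence filtration then forces the subgroup to fill $\Sp_4(\Z_\ell)$. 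Applying this to $\Gamma_\ell^{\sc}$ gives the desired conclusion that $\Gamma_\ell^{\sc} \cong \Sp_4(\Z_\ell)$ is hyperspecial maximal compact.

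The main technical obstacle is the second step, i.e.\ cleanly identifying the parahoric containing $\Gamma_\ell^{\sc}$ as hyperspecial and confirming that its mod-$\ell$ image really is all of $\Sp_4(\F_\ell)$ (rather than only a conjugate copy that might sit inside a proper reductive quotient). Both the lattice-theoretic and the parahoric-classification routes rely essentially on the fully symplectic structure from Theorem \ref{mt2}(i), so the two parts of Theorem \ref{mt2} enter in an intertwined way and excluding the finitely many primes where the symplectic form degenerates modulo $\ell$ is the place where the hypothesis $\ell \gg 0$ is genuinely used.
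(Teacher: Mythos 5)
Your proposal is correct in substance but takes a genuinely different middle route from the paper. The paper never constructs a self-dual lattice: it takes an arbitrary maximal compact $\Delta_\ell\subset\Sp_4(\Q_\ell)$ containing $[\Gamma_\ell,\Gamma_\ell]$, forms the Bruhat--Tits smooth affine group scheme $\cH_\ell/\Z_\ell$ with $\cH_\ell(\Z_\ell)=\Delta_\ell$, shows that the composition series of the image of $[\Gamma_\ell,\Gamma_\ell]$ in $\cH_\ell(\F_\ell)$ contains $\Sp_4(\F_\ell)/\{\pm1\}$, and deduces semisimplicity of the special fiber (hence hyperspecialness of $\Delta_\ell$) from the $\ell$-dimension criterion of \cite[Corollary 6(iv)]{HL16}; it then gets $[\Gamma_\ell,\Gamma_\ell]=\Delta_\ell$ from Vasiu's surjectivity theorem \cite[Theorem 1.3]{Va03} (the packaged form of your Frattini/congruence-filtration step) and finally passes to $\Gamma_\ell^{\sc}$ because a compact group containing a maximal compact equals it. Your lattice-theoretic identification of the hyperspecial subgroup does work, but the key point needs a sharper justification than ``exclude the primes where the form degenerates mod $\ell$'': there is no single integral form whose degeneration primes you can discard, since the lattice varies with $\ell$. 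The correct argument is that for each large $\ell$ you take any $\Gamma_\ell$-stable lattice $L$, scale the (irreducibility-unique, $\Q_\ell$-rational) pairing to be primitive on $L$; the radical of the reduced pairing on $L/\ell L$ is a Galois submodule, and residual irreducibility --- exactly what Theorem \ref{mt2}(ii) supplies --- forces it to vanish, so $L$ is self-dual and $\Sp(L)\cong\Sp_4(\Z_\ell)$ is hyperspecial. Your flagged worry about the mod-$\ell$ image being only a conjugate copy is then settled by an order count: a $\GL_4(\F_\ell)$-conjugate of $\Sp_4(\F_\ell)$ contained in $\Sp(L/\ell L)$ equals it; and note the small implicit checks that elements of $\Gamma_\ell^{\sc}$ are unit scalars times elements of $\Gamma_\ell$, hence stabilize $L$, and that perfectness places the copy of $\Sp_4(\F_\ell)$ inside the reduction of $[\Gamma_\ell,\Gamma_\ell]\subseteq\Gamma_\ell^{\sc}$. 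By contrast, your ``equivalent'' parahoric-classification variant is shakier as stated: without knowing $L$ is self-dual, the reduction you control is not a priori the reductive quotient of the maximal parahoric containing $\Gamma_\ell^{\sc}$, so stick with the lattice argument. What each approach buys: yours is more elementary and specific to $\Sp_4$ (no appeal to the $\ell$-dimension machinery of \cite{HL16}), while the paper's maximal-compact-plus-group-scheme route is more robust and uniform, fitting the general framework of \cite{HL16,HL20} where one cannot hand-pick a self-dual stable lattice.
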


\subsection{Remarks and comparisons of techniques}
An $\ell$-adic representation $\sigma_\ell:\Gal_K\to \GL_m(\overline\Q_\ell)$
is said to be \emph{of type A} if every simple factor of the Lie algebra $\mathfrak{g}_\ell$ of 
the algebraic monodromy group of the semisimplification $\sigma_\ell^{\ss}$
is of type A in the Killing-Cartan classification, i.e.,
$$[\mathfrak{g}_\ell,\mathfrak{g}_\ell]\cong \bigoplus_i \mathfrak{sl}_{k_i,\overline\Q_\ell}.$$
Note that $\sigma_\ell$ is of type A when $\dim\sigma_\ell=m\leq 3$.

For a semisimple Serre compatible system $\{\rho_\lambda\}_\lambda$ satisfying 
reasonable local conditions (Theorem \ref{general}(a),(b)), 
the big images results in \cite{Hu22} give lower bounds for the residual images
of subrepresentations $\sigma_\lambda$ in terms of the \emph{formal bi-characters} 
(Definition \ref{fc}) of the algebraic monodromy of $\sigma_\lambda$ 
for almost all $\lambda$ (Theorem \ref{general}(i)-(iii)). 
This result works particularly well if $\sigma_\lambda$ 
is of type A (Theorem \ref{general}(iv)-(v))
and can be used to streamline some arguments or improve some results in other works.

\subsubsection{Case $n=2$} Theorem \ref{mt1}(i) (and $F$ is any number field) is essentially due to Ribet \cite{Ri77} (see also \cite[Proposition 2.3.1]{BR92}) by using only the fact that if $\rho_{\lambda_1}$ is reducible for some $\lambda_1$ then it is locally algebraic,
which implies $\{\rho_\lambda\}_\lambda$ is a sum of two one-dimensional systems.
This contradicts that $\rho_{\lambda_0}$ is irreducible.

Theorem \ref{mt1}(iii) for Serre compatible system $\{\rho_\lambda\}_\lambda$ attached to objects like non-CM elliptic curves \cite{Se72} and (Hilbert) modular forms \cite{Ri85},\cite{Dim05} 
is obtained by analyzing some explicit data of the object, e.g., sizes of Fourier coefficients.
Since $\{\rho_\lambda\}_\lambda$ is of type A and is either motivic or strictly compatible, the results
follow directly from our big images results (Theorem \ref{general}(v)).

\subsubsection{Case $n=3$} 
In \cite{DW21}, it is proved that some three dimensional, regular self-dual compatible system $\{\rho_\ell\}_\ell$ arising from a surface (an elliptic fibration over the projective line) over $\Q$ is either (a) absolutely irreducible for a Dirichlet density one set of rational primes $\ell$ or (b) $\{\rho_\ell\}_\ell$ decomposes as a sum of two irreducible compatible systems. The density one set of primes condition appears 
in case (a) because some results of \cite{CG13} and \cite{BLGGT14} (that hold under density one\footnote{This is because works like \cite{BLGGT14} rely on a (Dirichlet density one) big images result of Larsen \cite{Lar95} for Serre compatible systems.}) are used. Case (a) can be improved to be absolutely irreducible for all $\ell$ (Theorem \ref{mt1}(i)).

In \cite{DV04}, it is proved that some three dimensional non-self-dual motivic compatible system
 $\{\rho_\lambda\}_\lambda$ of $\Q$ have big images 
(in particular $\bar\rho_\lambda$ is absolutely irreducible) if $\ell:=\text{char}(\lambda)$ 
belongs to a Dirichlet density one set of rational primes. 
The techniques (also in other works of Dieulefait \cite{Die02b},\cite{DV08,DV11},\cite{DZ20}) include studying the tame inertia characters (\cite{Se72}) for large $\ell$
by relating them to Hodge-Tate weights (\cite{FL82})
and the classification of maximal subgroups
of $\uG(\F_q)$ where $\uG$ is some connected $\F_q$-semisimple group.
Since $\{\rho_\lambda\}_\lambda$ is of type A and motivic, it follows from 
Theorem \ref{general}(v) that
residual (absolute) irreducibility holds
for almost all $\lambda$.

\subsubsection{Case $n=4$} Irreducibility and big images for four dimensional compatible system $\{\rho_\lambda\}_\lambda$ of $\Q$ are studied in the works \cite{Die02b},\cite{DV11},\cite{Ra13},\cite{LY16},\cite{DZ20},\cite{We22}, in which (potential) automorphy techniques 
such as Serre's conjecture and \cite{BLGGT14} are used to rule out two dimensional factors of $\rho_\lambda$. This is also the main idea for Theorem \ref{mt1}(iv) and Theorem \ref{mt2}. 
In order to apply such automorphy theorems, an oddness condition and 
a big image condition
are usually required (see Theorem \ref{pauto}(2),(4)). The first condition for
some two dimensional representation of a totally real field $F$ is
satisfied by a result of Calegari \cite{Ca11} (see Proposition \ref{odd}).
The second one can be obtained by Theorem \ref{general} if the formal character of $\bG_\lambda^{\der}$ is known. For example, if $\bG_{\lambda_0}^{\der}=\Sp_4$ then $\bG_\lambda^{\der}$ (in $\GL_4$) has only three possibilities: $\Sp_4$, $\SO_4$, $\SL_2\times\SL_2$ (by Theorem \ref{hui1}).
Big images results work well in the last two cases because they are of type A.
If the strict compatibility of $\{\rho_\lambda\}_\lambda$ and the
fully symplectic (for some $\lambda_0$) condition are replaced by weak compatibility and purity, 
then the potential automorphy assertion in Theorem \ref{mt2}(ii) 
is obtained in \cite[Theorem A]{PT15}.

\subsection{Structure of the article}
Section $2$ presents the preliminaries for studying monodromy groups of compatible systems,
including some conjectures on compatible systems 
of Galois representations and some essential results for later use, 
for example, a potential automorphy theorem \cite[Theorem C]{BLGGT14}
and some big images results in \cite{Hu22} (summarized as Theorem \ref{general}). 
Section $3$ is devoted to the proofs of the main results in $\mathsection\ref{mr}$.

\section*{Acknowledgments}
After the collaboration \cite{HL20}, 
Professor Larsen asked if there are some general four dimensional compatible systems $\{\rho_\ell\}_\ell$ with $\GSp_4$ 
algebraic monodromy that fulfill the big image criterion $(\ast)$ in Theorem \ref{inv},
i.e., $\rho_\ell$ has to be residually (absolutely) irreducible for all $\ell\gg0$. 
Larsen's question inspired the work \cite{Hu22}, which is used in this article essentially
to obtain Theorem \ref{mt2} and Corollary \ref{mc}, that to some extent, get back to his question.
My debt to the work of Professor Larsen on Galois representations is obvious 
and I am grateful to him for his collaboration and inspiration.
I would like to thank Professors Bo-Hae Im, Ravi Ramakrishna, and Pham Huu Tiep 
for organizing the conference ``Algebra 2022 and beyond'' in honor of Professor Larsen's 60th birthday 
and their kind invitation.

I would like to thank Haining Wang for his interest in the article.
I would like to thank the referee for helpful comments and suggestions
on the exposition, content, and references of the article. 
The work described in this article was partially supported by a grant 
from the Research Grants Council of the Hong Kong Special Administrative
Region, China (Project No. 17302321).

\section{Preliminaries}
\subsection{Compatible systems} Let $K$ and $E$ be number fields and denote their sets of finite
places by $\Sigma_K$ and $\Sigma_E$ respectively. For any prime number $\ell$,
denote by $S_\ell$ the set of elements of $\Sigma_K$ dividing $\ell$.
For $\lambda\in\Sigma_E$, denote by $\ell$ the residue characteristic of $\lambda$.
Let \begin{equation}\label{sc2}
\{\rho_\lambda:\Gal_K\to \GL_n(\overline{E}_\lambda)\}_{\lambda\in\Sigma_E}
\end{equation}
be a family of $\lambda$-adic representations of $K$.

\begin{defi}\label{scs}
The family \eqref {sc2} is called a weakly compatible system defined over $E$ (in the sense of \cite[$\mathsection5.1$]{BLGGT14}) if $\rho_\lambda$ is semisimple for all $\lambda$ and 
there exist a finite subset $S\subset\Sigma_K$ and a polynomial $P_v(t)\in E[t]$ for each $v\in\Sigma_K\backslash S$ such that
the following conditions (a),(b), and (c) hold.
\begin{enumerate}[(a)]
\item For each $\lambda\in\Sigma_E$  (with residue characteristic $\ell$)
and $v\in\Sigma_K\backslash (S\cup S_\ell)$, 
the representation $\rho_\lambda$ is unramified at $v$ and 
the characteristic polynomial of the image $\rho_\lambda(Frob_v)$ of the Frobenius class at $v$ is $P_v(t)$.
\item Each $\rho_\lambda$ is de Rham at all places $v\in S_\ell$ and
moreover crystalline if $v\notin S$.
\item For each embedding $\tau: K\hookrightarrow \overline E$, the set 
of $\tau$-Hodge-Tate numbers of $\rho_{\lambda}$ is independent of $\lambda$ and any $\overline E\hookrightarrow \overline E_\lambda$ over $E$.
\end{enumerate}
A weakly compatible system is called a strictly compatible system if condition (d) below holds.
\begin{enumerate}[(d)]
\item If $v\in\Sigma_K$ does not divide the residue characteristic of $\lambda$, 
the semisimplified Weil-Deligne representation 
$\iota \text{WD}(\rho_{\lambda}|_{\Gal_{K_v}})^{F-ss}$ 
is independent of $\lambda$ and $\iota:\overline{E}_\lambda\stackrel{\cong}{\rightarrow} \C$.
\end{enumerate}
\end{defi}

\begin{defi}\label{Sercs}(see \cite[Chapter 1]{Se98})
The family \eqref{sc2} is called a Serre compatible system of $K$ defined over $E$
if $\rho_\lambda(\Gal_K)\subset\GL_n(E_\lambda)$ for all $\lambda$
and the family satisfies the compatibility condition \ref{scs}(a) 
for some finite $S\subset\Sigma_K$ and $P_v(t)\in E[t]$ for each $v\in\Sigma_K\backslash S$.
\end{defi}

Under the distinct $\tau$-Hodge-Tate numbers condition, a strictly compatible system \eqref{sc2}
can be identified as a Serre compatible system after enlarging $E$.

\begin{prop}\label{use} \cite[Lemma 5.3.1(3)]{BLGGT14}
Let $\{\rho_\lambda:\Gal_K\to\GL_n(\overline{E}_\lambda)\}_\lambda$ be a strictly compatible system of $K$ defined over $E$ with distinct
$\tau$-Hodge-Tate numbers. After enlarging $E$, we may suppose $\{\rho_\lambda\}_\lambda$
is a semisimple Serre compatible system defined over $E$.
\end{prop}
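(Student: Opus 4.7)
The plan is a standard descent argument in two steps: first, to show that the character $\chi_\lambda := \mathrm{tr}\,\rho_\lambda$ takes values in $E$; second, to descend each $\rho_\lambda$ into $\GL_n(E_\lambda)$ after a uniform finite enlargement of $E$. For the first step, conditions \ref{scs}(a) and \ref{scs}(d) together imply that for every finite place $v$ of $K$ coprime to the residue characteristic $\ell$ of $\lambda$, the characteristic polynomial of $\rho_\lambda(\mathrm{Frob}_v)$ lies in $E[t]$ and is independent of $\lambda$. Since the Frobenius elements are dense in $\Gal_K$ by Chebotarev and $\rho_\lambda$ is semisimple and continuous, Brauer--Nesbitt yields $\chi_\lambda(\Gal_K) \subset E$.

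For the second step, the distinct $\tau$-Hodge-Tate numbers hypothesis forces $\rho_\lambda$ to be multiplicity-free: a repeated irreducible constituent would produce a repetition in the multiset of $\tau$-Hodge-Tate numbers. Thus $\rho_\lambda = \bigoplus_i \pi_{\lambda,i}$ with pairwise non-isomorphic constituents, and each constituent has character uniquely determined as a summand of $\chi_\lambda$, hence $E$-valued. A continuous semisimple $\ell$-adic representation with character in $E \subset E_\lambda$ can be realized over a finite extension of $E_\lambda$ of degree dividing the relevant Schur index, which is itself bounded by $n$. Enlarging $E$ to a finite extension $E' \supset E$ that splits all the relevant local Brauer classes gives $\rho_\lambda(\Gal_K) \subset \GL_n(E'_\lambda)$ for every $\lambda$, while the rationality of $P_v(t) \in E[t] \subset E'[t]$ is preserved, so $\{\rho_\lambda\}_\lambda$ becomes a Serre compatible system defined over $E'$.

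The main obstacle is the \emph{uniform} control of the descent across all (infinitely many) primes $\lambda$: a priori, the Brauer obstruction at each $\lambda$ could demand its own field enlargement. The resolution is that because the irreducible constituents across different $\lambda$ are tied together by the common $E$-rational characters and the compatible system structure, the collection of local Brauer obstructions is controlled by finitely many global invariants (e.g., a single global Brauer class or the splitting fields of the endomorphism algebras of the common irreducible ``compatible subsystems''), which can all be simultaneously trivialized by one finite extension $E'/E$.
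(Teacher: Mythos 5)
The paper does not actually reprove this statement: Proposition \ref{use} is quoted verbatim from \cite[Lemma 5.3.1(3)]{BLGGT14}, so what matters is whether your argument is sound on its own. The preliminary steps are essentially fine, with two slips: Chebotarev plus continuity gives $\mathrm{tr}\,\rho_\lambda(\Gal_K)\subset E_\lambda$ (the closure of $E$), not $\subset E$, and the irreducible constituents of $\rho_\lambda$ need not individually have $E_\lambda$-valued traces --- they are only permuted by $\mathrm{Aut}(\overline{E}_\lambda/E_\lambda)$, so only the sum over each Galois orbit has traces in $E_\lambda$ (this is patchable by working orbit by orbit and restricting scalars from the trace field of one constituent). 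Your deduction of multiplicity-freeness from the distinct $\tau$-Hodge--Tate numbers is correct.

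The genuine gap is the final descent step, which is also where you located the difficulty. Your proposed resolution --- enlarge $E$ to a finite extension $E'$ splitting ``all the relevant local Brauer classes'' --- cannot work as stated: for any finite extension $E'/E$, a positive density of primes $\lambda$ of $E$ split completely, so $E'_{\lambda'}=E_\lambda$ there and the enlargement splits nothing at those primes; since the conclusion is required for \emph{all} $\lambda$, this approach gives no uniform control. Nor is there, in this generality, a ``single global Brauer class'' or endomorphism algebra of a global object whose localizations produce the obstructions: the $\rho_\lambda$ are tied together only through the polynomials $P_v(t)$, not through a common motive, so the phrase ``controlled by finitely many global invariants'' is an assertion, not an argument. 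The actual content of the cited lemma is that regularity kills each local obstruction outright: if an irreducible constituent $W$ of $\rho_\lambda$ could only be realized over a division algebra $D$ of Schur index $s>1$ over the field generated by its traces, then $D$ would act on $D_{\mathrm{dR}}(W|_{\Gal_{K_v}})$ compatibly with the filtration for $v\mid\ell$, forcing every $\tau$-Hodge--Tate multiplicity of $W$ to be divisible by $s$, contradicting the hypothesis that these multiplicities are $0$ or $1$. Hence $s=1$, each constituent is defined over its trace field, and restriction of scalars over each $\mathrm{Aut}(\overline{E}_\lambda/E_\lambda)$-orbit realizes $\rho_\lambda$ over $E_\lambda$ itself; no Brauer-splitting enlargement of $E$ is needed for this step at all. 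Without some argument of this kind (i.e., an actual use of regularity beyond multiplicity-freeness), your proof does not go through.
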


\subsection{Automorphy of Galois representations} When $K$ is totally real or CM, one can attach an $n$-dimensional strictly compatible system of $K$
defined over some CM field $E$ to a regular algebraic, polarized, cuspidal automorphic representation $\pi$
of $\GL_n(\A_K)$ (see \cite[$\mathsection2.1$]{BLGGT14}) such that the L-functions agree. 
Conversely, the Fontaine-Mazur-Langlands conjecture \cite{FM95},\cite{Lan79} (see also \cite{Cl90},\cite{Ta04})
asserts that any irreducible $\ell$-adic representation $\rho_\ell: \Gal_\Q\to\GL_n(\overline\Q_\ell)$
 which is unramified at all but finitely many primes and with $\rho_\ell|_{\Gal_{\Q_\ell}}$ de Rham
comes from a cuspidal automorphic representation of $\GL_n(\A_\Q)$. For $n=2$, many cases of this
conjecture  are established by Kisin \cite{Ki09} and Emerton \cite{Em11} when $\rho_\ell$ is residually irreducible
and recently by Pan \cite{Pa21} when $\rho_\ell$ is residually reducible. For general $n$, we have the 
following potential automorphy theorem for totally real field. Denote $\zeta_\ell:=e^{2\pi i/\ell}$.

\begin{thm}\cite[Theorem C]{BLGGT14}\label{pauto}
Suppose $F$ is a totally real field. Let $m$ be a positive integer and let $\ell\geq 2(m+1)$ be a prime. Let
$$\rho_\ell:\Gal_F\to\GL_m(\overline\Q_\ell)$$
be a continuous representation. Suppose the following conditions are satisfied.
\begin{enumerate}[(1)]
\item (Unramified almost everywhere): $\rho_\ell$ is unramified at all but finitely many primes.
\item (Odd essential self-duality): either $\rho_\ell$ maps to $\GSp_m$ with totally odd similitude character
or it maps to $\GO_m$ with totally even similitude character.
\item (Potential diagonalizability and regularity): $\rho_\ell$ is potentially diagonalizable (and hence potentially crystalline)
at each prime $v$ of $F$ above $\ell$ and for each $\tau: F\hookrightarrow \overline\Q_\ell$ 
it has $m$-distinct $\tau$-Hodge-Tate numbers.
\item (Irreducibility): $\rho_\ell|_{\Gal_{F(\zeta_\ell)}}$ is residually irreducible.
\end{enumerate}
Then we can find a finite Galois totally real extension $F'/F$ such that $\rho_\ell|_{\Gal_{F'}}$
is automorphic (i.e., attached to a regular algebraic, (polarized) cuspidal automorphic representation of $\GL_m(\A_{F'})$). 
Moreover, $\rho_\ell$ is part of a strictly (pure) compatible system of $F$. 
\end{thm}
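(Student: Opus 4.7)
The plan is to follow the general Khare--Wintenberger / Harris--Shepherd-Barron--Taylor strategy, combined with the potential diagonalizability automorphy lifting theorems that form the technical heart of \cite{BLGGT14}. The idea is to transfer automorphy from a characteristic $\ell'$ (where one can exhibit an automorphic geometric realization) to characteristic $\ell$ via an auxiliary motive that is congruent to $\rho_\ell$ modulo $\ell$.

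First I would use a version of Moret--Bailly's theorem for totally real fields to produce a finite totally real Galois extension $F_1/F$ and a suitable auxiliary Hilbert modular form (or product of such) over $F_1$ with prescribed behaviour at the primes of $F_1$ above $\ell$, $\ell'$, and the ramification set of $\rho_\ell$. The tensor product of $\rho_\ell|_{\Gal_{F_1}}$ with the $\ell$-adic Galois representation attached to this auxiliary form can, after a suitable twist, be arranged to match the residual $\ell$-adic realization of a member of a Dwork-type family of Calabi--Yau hypersurfaces parametrised by a smooth totally real variety. The oddness of the similitude character in (2), together with the regularity and distinctness of Hodge--Tate numbers in (3), guarantee that the target of the comparison lies in the automorphic (symplectic or orthogonal) world, while the residual irreducibility hypothesis (4) over $F(\zeta_\ell)$ gives enough big-image input for the lifting theorems to apply.

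Next I would specialise the Dwork family at a second prime $\ell'$ to reach a point whose $\ell'$-adic realisation is automorphic by Clozel--Harris--Taylor (coming, via Fermat-type degenerations, from a CM abelian variety of known automorphy). Transferring this automorphy back along the family requires the ``$\ell$--$\ell'$ switch'': apply the potential diagonalizability automorphy lifting theorem of \cite{BLGGT14} at the prime $\ell'$, then at $\ell$, each time passing to a further solvable totally real extension to satisfy the local hypotheses and using Langlands--Arthur solvable base change to descend and re-ascend automorphy along these extensions. At the end, $\rho_\ell|_{\Gal_{F'}}$ is automorphic for some totally real Galois $F'/F$, and the fact that it then sits inside a strictly compatible pure system follows from the standard construction of Galois representations attached to regular algebraic polarised cuspidal automorphic representations.

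The main obstacle is the automorphy lifting step in the potentially diagonalizable setting: this is the deep Taylor--Wiles--Kisin style argument refined in \cite{BLGGT14}, and it is where the hypotheses $\ell\geq 2(m+1)$, the regularity of Hodge--Tate numbers, the oddness of the similitude character, and the residual irreducibility over $F(\zeta_\ell)$ are all essential (they enter respectively through the Fontaine--Laffaille / diagonalizability condition, the Taylor--Wiles primes construction, and the avoidance of small image pathologies). Orchestrating the Moret--Bailly step so that every local condition at $\ell$, $\ell'$, and the ramification set is simultaneously satisfied is the other delicate piece, but it is essentially a combination of strong approximation with explicit local deformation computations rather than new conceptual input.
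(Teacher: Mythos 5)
This statement is not proved in the paper at all: it is quoted verbatim from \cite{BLGGT14} (Theorem C) and used as a black box, so there is no internal argument to compare yours against. Your outline is essentially the strategy of the proof in that cited reference --- Moret--Bailly to find suitable totally real points, potential automorphy via the Dwork family, the $\ell$--$\ell'$ switch using the potentially diagonalizable automorphy lifting theorems, and solvable base change --- so it is the right approach, but as written it is a sketch of the known deep argument rather than a self-contained proof, and for the purposes of this paper the correct move is simply to cite \cite{BLGGT14} as the author does.
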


Given a strictly (and also Serre) compatible system 
$\{\rho_\lambda:\Gal_F\to\GL_n(E_\lambda)\}_\lambda$ of a totally real field $F$, in order to apply Theorem \ref{pauto}
to a subrepresentation $r_\lambda$ of $\rho_\lambda$ we need $r_\lambda$ to satisfy the four conditions.
The first condition is automatic and the third one holds when $\ell$ is sufficiently large 
(by conditions \ref{scs}(b),(c) and \cite[Lemma 1.4.3(2)]{BLGGT14}). 
For the second one (odd essential self-duality), 
we have the following result when $\dim r_\lambda=2$.

\begin{prop}\cite[Proposition 2.5]{CG13}\label{odd}
Suppose $r_\ell:\Gal_F\to\GL_2(\overline\Q_\ell)$ is a continuous
representation of a totally real field $F$ and  $\ell>7$. Assume that
\begin{enumerate}[(1)]
\item $r_\ell$ is unramified outside of finitely many primes.
\item $\mathrm{Sym}^2 r_\ell|_{\Gal_{F(\zeta_\ell)}}$ is residually irreducible.
\item $\ell$ is unramified in $F$.
\item For each place $v|\ell$ of $F$ and each $\tau:F_v\hookrightarrow \overline\Q_\ell$,
the $\tau$-Hodge-Tate numbers of $r_\ell|_{\Gal_{F_v}}$ is a set of two distinct integers
whose difference is less than $(\ell-1)/2$, and $r_\ell|_{\Gal_{F_v}}$ is crystalline.
\end{enumerate}
Then, the pair $(r_\ell,\det r_\ell)$ is essentially self-dual and odd.
\end{prop}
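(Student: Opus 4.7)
The essential self-duality in the conclusion is automatic in dimension two: the exterior-square isomorphism $\Lambda^2 r_\ell \cong \det r_\ell$ gives a skew-symmetric pairing $r_\ell \otimes r_\ell \to \det r_\ell$, so $(r_\ell, \det r_\ell)$ is symplectically self-dual. The content of the proposition is therefore the oddness of $\det r_\ell(c)$ for complex conjugation $c$. My plan avoids assuming oddness of $r_\ell$ at any stage by working instead with the three-dimensional symmetric square $\sigma_\ell := \mathrm{Sym}^2 r_\ell$, which automatically lands in $\GO_3$ with similitude character $(\det r_\ell)^2$; this similitude character is visibly totally even, being the square of a character.

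First I would verify the hypotheses of Theorem~\ref{pauto} for $\sigma_\ell$ with $m=3$. The bound $\ell > 7$ gives $\ell \geq 11 \geq 2(m+1)$. Continuity, unramified-almost-everywhere, and crystallinity above $\ell$ are inherited from $r_\ell$. The $\tau$-Hodge--Tate numbers of $\sigma_\ell$ are $\{2a_\tau,\, a_\tau+b_\tau,\, 2b_\tau\}$, which are distinct because $a_\tau \neq b_\tau$ and whose spread $2|a_\tau - b_\tau|$ is less than $\ell - 1$ by hypothesis (4); together with $\ell$ being unramified in $F$, this places $\sigma_\ell|_{\Gal_{F_v}}$ squarely in the Fontaine--Laffaille range, giving potential diagonalizability via \cite[Lemma 1.4.3]{BLGGT14}. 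Residual irreducibility of $\sigma_\ell|_{\Gal_{F(\zeta_\ell)}}$ is hypothesis (2). Applying Theorem~\ref{pauto} produces a totally real finite Galois extension $F'/F$ and a regular algebraic, polarized, cuspidal automorphic representation $\Pi$ of $\GL_3(\A_{F'})$ of orthogonal type cutting out $\sigma_\ell|_{\Gal_{F'}}$.

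Next I would descend $\Pi$ to a cuspidal representation $\pi$ on $\GL_2(\A_{F'})$ using the converse to the Gelbart--Jacquet symmetric-square lift, namely Ramakrishnan's characterization: every regular algebraic, self-dual cuspidal representation of $\GL_3$ of orthogonal type is of the form $\mathrm{Sym}^2 \pi \otimes \omega_\pi^{-1}$ for some cuspidal $\pi$ on $\GL_2$. Comparing Satake parameters at a density-one set of unramified primes with those of $r_\ell|_{\Gal_{F'}}$, via strong multiplicity one and Chebotarev, identifies $r_\ell|_{\Gal_{F'}}$, up to a quadratic twist by a finite-order character $\eta$, with the Galois representation attached to $\pi$. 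Because $F'$ is totally real, $\pi$ corresponds (up to twist) to a Hilbert modular cuspform, and its Galois representation is odd by the standard oddness theorem for Hilbert modular Galois representations. Since $\eta^2 = 1$, the determinant $\det r_\ell|_{\Gal_{F'}}$ agrees with that of the $\pi$-representation, so $\det r_\ell(c') = -1$ for every complex conjugation $c' \in \Gal_{F'}$. Choosing real embeddings of $F'$ extending those of $F$, each $c \in \Gal_F$ lies in $\Gal_{F'}$, and oddness descends to $r_\ell$ itself.

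The principal obstacle is the automorphic descent from $\GL_3$ back to $\GL_2$: one must not only invoke Ramakrishnan's theorem but also match the Galois-side data so as to identify $r_\ell|_{\Gal_{F'}}$, up to a quadratic twist, with the Galois representation of a $\GL_2$-cuspform. This requires careful bookkeeping of central characters and the observation that $(\det r_\ell)^2$ admits a square-root character with the right local properties so that $\mathrm{Sym}^2 r_\ell|_{\Gal_{F'}}$ genuinely equals the symmetric square of a $\GL_2$ Galois representation. A subsidiary point is that the field $F'$ produced by Theorem~\ref{pauto} is uncontrolled, but since it is totally real the oddness transfer to $F$ is unproblematic.
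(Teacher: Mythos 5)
This proposition is not proved in the paper at all: it is quoted verbatim from Calegari--Gee \cite[Proposition 2.5]{CG13}, whose proof in turn rests on Calegari's work on even Galois representations \cite{Ca11}. Your reconstruction follows essentially the same route as that underlying proof: essential self-duality is free in dimension two via $\Lambda^2 r_\ell\cong\det r_\ell$; the hypotheses $(1)$--$(4)$ are exactly calibrated so that $\mathrm{Sym}^2 r_\ell$ (orthogonal, with automatically totally even multiplier $(\det r_\ell)^2$, Fontaine--Laffaille at $\ell$ since $2|a_\tau-b_\tau|<\ell-1$ and $\ell$ is unramified in $F$) satisfies the potential automorphy theorem; one then descends the resulting self-dual cuspidal representation of $\GL_3(\A_{F'})$ to a cuspidal $\pi$ on $\GL_2(\A_{F'})$ by Ramakrishnan's converse to the Gelbart--Jacquet lift and pulls oddness back from the Galois representation of the associated Hilbert modular form. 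So your approach is correct and is in substance the proof of the cited result rather than an alternative to it.

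Two points in your write-up deserve to be made explicit, since they carry the real content. First, ``$\pi$ corresponds (up to twist) to a Hilbert modular cuspform'' is precisely the parity issue the whole statement turns on: you must argue that the archimedean components of the descended regular algebraic cuspidal $\pi$ are essentially discrete series (a regular algebraic \emph{principal series} component is excluded for a unitary cuspidal representation by the unitarity classification together with bounds towards Ramanujan--Selberg at infinity), and then invoke total oddness of Galois representations attached to Hilbert modular forms. Second, the twist bookkeeping you flag as the main obstacle can be short-circuited: you never need a square root of $(\det r_\ell)^2$ nor $\eta^2=1$. From $\mathrm{Sym}^2 r_\ell|_{\Gal_{F'}}\cong \mathrm{Sym}^2\rho_\pi\otimes\chi$ and irreducibility one gets $r_\ell|_{\Gal_{F'}}\cong\rho_\pi\otimes\eta$ for \emph{some} character $\eta$, and then $\det r_\ell(c)=\det\rho_\pi(c)\,\eta(c)^2=\det\rho_\pi(c)=-1$ since $\eta(c)^2=1$ automatically; alternatively, compare traces of complex conjugation on the three-dimensional representations (evenness of $\det r_\ell$ would force trace $3$, whereas a twist of $\mathrm{Sym}^2$ of an odd representation gives trace $\pm1$). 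With those two points supplied, your argument is complete and matches the known proof of the quoted proposition.
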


Hence, for $\dim r_\lambda=2$ and $\ell\gg0$ the conditions (1),(3),(4) of 
Proposition \ref{odd} hold (again using \ref{scs}(b),(c)).
Both conditions Theorem \ref{pauto}(4) and Proposition \ref{odd}(2)
require $r_\lambda$ to have big image, which 
are fulfilled if the algebraic monodromy group of $r_\lambda$ contains $\SL_2$
and $\ell\gg0$ by applying Theorem \ref{general}(iv) and Proposition \ref{use1} to the compatible system $\{\rho_\lambda\}_\lambda$. 

\subsection{Big Galois images and irreducibility}\label{bgi}
Our works on big Galois images \cite{HL16},\cite{HL20}, and \cite{Hu22} are motivated by 
a well-known theorem of Serre on Galois actions of non-CM elliptic curves \cite{Se72} and also
a conjecture of Larsen \cite{Lar95} on Galois actions of motivic compatible systems (see also \cite{Se94}). 
Let $X$ be a smooth projective variety defined over a number field $K$
and $w\geq0$ an integer. The \'etale cohomology group $V_\ell:=H^w(X_{\overline K},\Q_\ell)$
is acted on by $\Gal_K$ for all primes $\ell$. By \cite{De74}, the family of $\ell$-adic representations
\begin{equation}\label{sc3}
\{\rho_\ell:\Gal_K\to\GL(V_\ell)\cong\GL_n(\Q_\ell)\}_{\ell\in\Sigma_\Q}
\end{equation}
is a Serre compatible system of $K$ defined over $\Q$ (Definition \ref{Sercs}).  
Let $\Gamma_\ell$ (resp. $\bG_\ell$) be the image (resp. algebraic monodromy group) of $\rho_\ell$.
Consider the diagram
\begin{equation}\label{gsc}
\xymatrix{
     &  \bG_\ell^{\sc}(\Q_\ell)\ar[d]^{\pi_\ell}\\
\bG_\ell^\circ(\Q_\ell) \ar[r]^{q_\ell} &\bG_\ell^{\ss}(\Q_\ell)}
\end{equation}
where $\bG_\ell^{\ss}$ is the quotient of the connected linear algebraic group $\bG_\ell^{\circ}$ by radical,
$\bG_\ell^{\sc}$ is the universal covering of $\bG_\ell^{\ss}$, and $q_\ell$ (resp. $\pi_\ell$)
is given by the quotient (resp. covering) morphism. 

\begin{defi}\label{gscd}
Denote by $\Gamma_\ell^{\sc}$ the compact subgroup $\pi_\ell^{-1}(q_\ell(\Gamma_\ell\cap\bG_\ell^\circ(\Q_\ell)))$
of $\bG_\ell^{\sc}(\Q_\ell)$ in diagram \eqref{gsc}.
\end{defi}

Larsen conjectured \cite{Lar95} that for all $\ell\gg0$, the subgroup $\Gamma_\ell^{\sc}\subset \bG_\ell^{\sc}(\Q_\ell)$ is
 \emph{hyperspecial maximal compact}, i.e., 
there exists a semisimple group scheme $\cG_\ell$ defined over $\Z_\ell$ with generic fiber $\bG_\ell^{\sc}$ 
such that $\cG_\ell(\Z_\ell)=\Gamma_\ell^{\sc}$. He proved that for a set of
primes $\ell$ of Dirichlet density one the assertion is true \cite[Theorem 3.17]{Lar95}.
We proved the conjecture when (the semisimplification of) 
$\rho_\ell$ is of type A for $\ell\gg0$ \cite{HL16} and 
when $X$ is an abelian variety or hyper-K\"ahler variety (and $w=2$) \cite[Theorem 1.3]{HL20}.
We also established the following criterion.

\begin{thm}\label{inv}\cite[Theorem 1.2]{HL20}
Let $\{\rho_\ell\}_\ell$ be the Serre compatible system  arising from the $w$th cohomology 
of a smooth projective variety $X/K$ such that $\bG_\ell$ is 
connected for all $\ell$.
For all sufficiently large $\ell$, the subgroup $\Gamma_\ell^{\sc}\subset \bG_\ell^{\sc}(\Q_\ell)$ 
is hyperspecial maximal compact if and only if 
\begin{equation*}
\tag{$\ast$}
\dim_{\Q_\ell} \End_{\Gal_K}(V_\ell)=\dim_{\F_\ell}\End_{\Gal_K}(\overline V_\ell^{\ss}),
\end{equation*}
where $\overline V_\ell^{\ss}$ is the semisimplified reduction of $V_\ell$.
\end{thm}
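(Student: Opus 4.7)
The plan is to prove the two implications separately, exploiting the hypothesis that $\bG_\ell$ is connected for all $\ell$.

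For the forward direction, suppose $\Gamma_\ell^{\sc}$ is hyperspecial maximal compact. Then there exists a smooth semisimple $\Z_\ell$-group scheme $\cG_\ell^{\sc}$ with generic fiber $\bG_\ell^{\sc}$ and $\cG_\ell^{\sc}(\Z_\ell)=\Gamma_\ell^{\sc}$. Fix a $\Gal_K$-stable $\Z_\ell$-lattice $L_\ell\subset V_\ell$; for $\ell\gg 0$ the action of $\cG_\ell^{\sc}$ on $V_\ell$ extends to a morphism $\cG_\ell^{\sc}\to \GL(L_\ell)$. Since invariants for a smooth reductive group scheme commute with base change, one has
$$\End_{\cG_\ell^{\sc}}(L_\ell)\otimes_{\Z_\ell}\F_\ell\;\cong\;\End_{\cG_\ell^{\sc}\otimes\F_\ell}(L_\ell\otimes\F_\ell).$$
Zariski density of $\Gamma_\ell$ on the generic fiber identifies the left-hand side with $\End_{\Gal_K}(V_\ell)\otimes_{\Z_\ell}\F_\ell$ up to finite index, while an analogous residual density modulo $\ell$ (supplied by the theory developed in \cite{HL16} for the residual image of a hyperspecial compact) identifies the right-hand side with $\End_{\Gal_K}(\overline V_\ell^{\ss})$. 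Equality $(\ast)$ follows.

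For the reverse direction, I would start from Larsen's theorem \cite[Theorem 3.17]{Lar95}, which asserts that $\Gamma_\ell^{\sc}$ is already hyperspecial maximal compact for a Dirichlet density-one set $\mathcal L$ of primes. It then suffices to rule out pathological behavior at a prime $\ell\notin\mathcal L$ satisfying $(\ast)$. Let $\underline H_\ell\subset \GL(L_\ell\otimes\F_\ell)$ be the connected algebraic envelope of the residual image, produced by Nori--Larsen--Pink theory. A priori $\underline H_\ell$ can fail to be the reduction mod $\ell$ of the expected smooth model of $\bG_\ell^{\sc}$ in two ways: its rank can drop, or its invariants on $L_\ell\otimes\F_\ell$ can strictly exceed the generic invariants. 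The $\lambda$-independence of formal bi-characters developed in \cite{HL16}, applicable because $\bG_\ell$ is connected and $\{\rho_\ell\}_\ell$ is a Serre compatible system from cohomology, excludes the rank drop for $\ell\gg 0$. Hypothesis $(\ast)$ directly excludes any growth of the commutant. With both pathologies excluded, the schematic closure of $\Gamma_\ell$ in $\GL(L_\ell)$ has smooth reductive identity component over $\Z_\ell$, and hyperspeciality follows.

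The principal obstacle is the reverse direction, and specifically the translation of the numerical identity $(\ast)$ into a structural statement about the schematic closure of $\Gamma_\ell$. The subtlety is that $(\ast)$ compares endomorphism algebras of the semisimplification of the residual representation, while hyperspeciality concerns the group-scheme structure of an integral model, and in particular the absence of a unipotent radical on its special fiber. The crux of the proof is to show that any such unipotent radical, or any proper reductive subgroup arising as $\underline H_\ell$, necessarily inflates $\dim_{\F_\ell}\End_{\Gal_K}(\overline V_\ell^{\ss})$ strictly beyond $\dim_{\Q_\ell}\End_{\Gal_K}(V_\ell)$. This endomorphism-counting dichotomy is the essential new input of \cite{HL20}, and relies delicately on the compatible-system structure together with the integrality afforded by $V_\ell$ arising from the \'etale cohomology of a smooth projective variety.
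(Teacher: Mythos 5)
You should first note that this paper does not prove Theorem \ref{inv} at all: it is quoted from \cite[Theorem 1.2]{HL20}, so your proposal has to be measured against the proof given there. At the level of outline your sketch does resemble that proof: both directions are mediated by an integral or residual avatar of $\bG_\ell$ (the algebraic envelope $\uG_\ell$ of \cite{Hu15}, which for $\ell\gg0$ has the same formal bi-character as $\bG_\ell$ and the same commutant on $\overline V_\ell^{\ss}$ as the residual image), and the criterion $(\ast)$ is interpreted as saying that the commutant does not grow under reduction.

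The difficulty is that your argument asserts, rather than proves, exactly the step where the theorem lives. In the reverse direction you reduce to the claim that a rank drop of the residual envelope or a unipotent radical in the special fibre ``necessarily inflates'' $\dim_{\F_\ell}\End_{\Gal_K}(\overline V_\ell^{\ss})$, and you explicitly defer this ``endomorphism-counting dichotomy'' to \cite{HL20}; but that dichotomy --- together with the identification of the Galois commutant on $\overline V_\ell^{\ss}$ with the commutant of the envelope, and the passage from a reductive schematic closure of $\Gamma_\ell$ to hyperspeciality of $\Gamma_\ell^{\sc}$ (which needs Bruhat--Tits theory and surjectivity/generation inputs in the spirit of \cite{Va03} and \cite{St68}, not merely smoothness of an integral model) --- \emph{is} the content of the theorem, so invoking it is circular. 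Moreover, Larsen's density-one theorem \cite{Lar95} does not help at the remaining primes; what is actually used is the for-all-large-$\ell$ independence of formal bi-characters from \cite{Hu15}, not a density statement. The forward direction is also incomplete: you need (i) that the hyperspecial model acts on the chosen Galois-stable lattice $L_\ell$ and that forming invariants of $\End(L_\ell)$ commutes with reduction for $\ell\gg0$ (a weight/height-bound argument, not automatic), and (ii) a comparison between endomorphisms of the special fibre and $\End_{\Gal_K}(\overline V_\ell^{\ss})$, where the residual Galois image, the image of $\Gamma_\ell^{\sc}$, and the semisimplified reduction all differ a priori; ``up to finite index'' is not a meaningful relation between endomorphism algebras, and finite-index discrepancies can change commutants mod $\ell$ unless controlled precisely by the envelope machinery. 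So the sketch traces a plausible outline but leaves the central mechanism of \cite{HL20} unestablished.
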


Not only is $\Gamma_\ell\subset\bG_\ell(\Q_\ell)$ conjectured to be large for $\ell\gg0$,
but the (general) Mumford-Tate conjecture \cite[$\mathsection9$]{Se94} asserts the $\ell$-independence of 
algebraic monodromy group $\bG_\ell$ in the sense that if $\bG_{MT}$ 
is the Mumford-Tate group of $H^w(X(\C),\Q)$, then
for all $\ell$ we have
$$\bG_\ell^\circ\cong \bG_{MT}\times_\Q\Q_\ell.$$ 
For semisimple Serre compatible systems, we obtained the following 
$\lambda$-independence result (Theorem \ref{hui1}) on 
the algebraic monodromy groups. First we give a definition.

\begin{defi}\label{fc}
Let $F$ be a field and  $\bH\subset\GL_{n,F}$ a reductive subgroup.
\begin{enumerate}[(a)]
\item The formal character 
of $\bH$ is a subtorus $\bT$ in $\GL_{n,\overline{F}}$ up to conjugation such that
$\bT$ is a maximal torus of $\bH\times\overline{F}$. 
\item The formal bi-character of $\bH$
is a chain of subtori $\bT'\subset\bT$ in $\GL_{n,\overline{F}}$ up to conjugation such that
$\bT$ is a maximal torus of $\bH\times\overline{F}$ and $\bT'$ is a maximal torus 
of $\bH^{\der}\times\overline{F}$, where $\bH^{\der}$ denotes the derived group of the identity component $\bH^\circ$.
\end{enumerate}
\end{defi}

\begin{thm}\label{hui1}\cite[Theorem 3.19, Remark 3.22]{Hu13}
Let $\{\phi_\lambda\}_\lambda$ be a semisimple $n$-dimensional Serre compatible system of $K$ (defined over $E$). 
Then the formal bi-character of the algebraic monodromy group $\bH_\lambda\subset\GL_{n,E_\lambda}$ is independent 
of $\lambda$\footnote{It means that the formal bi-characters have
the same $\Z$-form in $\GL_{n,\Z}$.}. In particular, the rank (resp. semisimple rank)
of $\bH_\lambda$ is independent of $\lambda$.
\end{thm}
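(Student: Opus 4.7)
The plan is to reconstruct the formal bi-character from the $\lambda$-independent Frobenius data via Serre's theory of Frobenius tori. For a finite place $v$ of $K$ outside $S$ with residue characteristic different from that of $\lambda$, the element $\phi_\lambda(\operatorname{Frob}_v)$ is semisimple with characteristic polynomial $P_v(t)\in E[t]$ independent of $\lambda$. The identity component $T_v^\lambda$ of the Zariski closure of the subgroup generated by $\phi_\lambda(\operatorname{Frob}_v)$ is a subtorus of $\bH_\lambda^\circ$, and the character quotient $\Z^n = X^*(\operatorname{diag}) \twoheadrightarrow X^*(T_v^\lambda)$ is dictated purely by the multiplicative relations among the eigenvalues of $\phi_\lambda(\operatorname{Frob}_v)$. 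Since those eigenvalues are roots of the $\lambda$-independent polynomial $P_v(t)$, the tori $T_v^\lambda$ for varying $\lambda$ share a common $\Z$-form in $\GL_{n,\Z}$.

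By Serre's density theorem for Frobenius tori, for each $\lambda$ the places $v$ such that $T_v^\lambda$ is a maximal torus of $\bH_\lambda^\circ$ form a set of positive Dirichlet density. Intersecting the positive-density sets attached to $\lambda$ and $\lambda'$, one may choose a single $v$ whose Frobenius torus is simultaneously maximal in both $\bH_\lambda^\circ$ and $\bH_{\lambda'}^\circ$. The common $\Z$-form of $T_v^\lambda$ and $T_v^{\lambda'}$ then realizes the formal characters of $\bH_\lambda$ and $\bH_{\lambda'}$ as identical subtori of $\GL_{n,\Z}$, yielding $\lambda$-independence of the formal character, and in particular of the rank.

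To upgrade this to the formal bi-character I would cut out the radical torus of $\bH_\lambda^\circ$ using the characters of its abelianization. Such characters correspond to one-dimensional subrepresentations of tensor constructions $\operatorname{Sym}^j \phi_\lambda \otimes \operatorname{Sym}^k \phi_\lambda^\vee$; each such subrepresentation is itself a one-dimensional compatible subsystem whose Frobenius eigenvalues are $\lambda$-independent, so the sublattice of $X^*(T_v^\lambda)$ generated by the restrictions of these characters has a $\lambda$-independent $\Z$-form. Its annihilator in the cocharacter side carves out the maximal torus of $\bH_\lambda^{\der}$ inside $T_v^\lambda$, with a $\lambda$-independent $\Z$-form in $\GL_{n,\Z}$, giving the desired bi-character independence. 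The main obstacle is ensuring that a single finite list of characters detects the derived subgroup uniformly in $\lambda$; I would handle this by a Tannakian argument, observing that once the formal character is fixed, the subcategory of $\bH_\lambda^\circ$-representations built from $\phi_\lambda$ is cut out by combinatorial data on the common character lattice, so the sublattice corresponding to the abelianization depends only on $\lambda$-independent compatible system data.
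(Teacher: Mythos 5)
Your first step (the formal character) is essentially the classical Frobenius-torus argument of Serre that underlies the cited reference [Hu13], and it is sound in outline, with two repairs needed: $\phi_\lambda(\mathrm{Frob}_v)$ need not be a semisimple element (semisimplicity of Frobenius is not known for a general Serre compatible system), so you must pass to its semisimple part, which still has characteristic polynomial $P_v(t)$; and two sets of \emph{positive} density need not intersect, so you cannot "intersect the positive-density sets" as stated. The standard fix is to first pass to a finite extension over which every $\bH_\lambda$ is connected (this extension can be taken independent of $\lambda$, e.g.\ by [LP97]), after which the set of places with maximal Frobenius torus has density one for each $\lambda$, and density-one sets do intersect. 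With these repairs the $\lambda$-independence of the formal character, hence of the rank, follows as you say.

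The genuine gap is in the upgrade to the formal bi-character, which is precisely the new content of [Hu13] beyond Serre. You characterize the maximal torus of $\bH_\lambda^{\der}$ inside $\bT_\lambda$ as the connected common kernel of the characters of $\bH_\lambda^{\circ}$ (correct), and you then claim that each such character, realized as a one-dimensional subrepresentation of some $\Sym^j\phi_\lambda\otimes\Sym^k\phi_\lambda^{\vee}$, "is itself a one-dimensional compatible subsystem whose Frobenius eigenvalues are $\lambda$-independent." This is unjustified and is exactly the crux: nothing in the definition of a Serre compatible system (only condition (a) of Definition 2.1 is available here; no de Rham or Hodge--Tate hypotheses) tells you that a one-dimensional constituent of a tensor construction at one $\lambda$ matches a one-dimensional constituent at another $\lambda$, or even that the number of such constituents is the same --- the $\lambda$-independence of the decomposition of $\phi_\lambda\otimes\phi_\lambda^{\vee}$ (e.g.\ of $\dim\End_{\Gal_K}(V_\lambda)$) is a deep open problem of Tate-conjecture type, and is the very kind of statement this paper is working to circumvent. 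Knowing only that the Frobenius value of such a character at $v$ is \emph{some} monomial in the roots of $P_v(t)$ does not pin down \emph{which} sublattice of $X^*(T_v)$ is cut out for each $\lambda$. Your closing "Tannakian argument" asserts that "the sublattice corresponding to the abelianization depends only on $\lambda$-independent compatible system data," which assumes the conclusion rather than proving it. So the semisimple-rank/derived-torus half of the theorem is not established by your argument; the paper itself does not reprove it either but imports it from [Hu13], where this step requires a separate, substantive argument rather than a formal consequence of the Frobenius-torus method.
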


Moreover, we extended  in \cite{Hu15} this $\lambda$-independence result to the mod $\ell$ 
motivic compatible system $\{\bar\rho_\ell^{\ss}:\Gal_K\to\GL(\overline V_\ell^{\ss})\}_\ell$. 
More precisely, for $\ell\gg0$ we constructed a connected reductive subgroup $\uG_\ell\subset\GL_{n,\F_\ell}$ 
called the \emph{algebraic envelope}\footnote{Serre constructed algebraic envelopes to study
 Galois actions on $\ell$-torsions of abelian varieties for $\ell\gg0$ \cite{Se86}.} of $\rho_\ell$
 such that $\uG_\ell(\F_\ell)$ is uniformly commensurate with $\bar\Gamma_\ell$ (the image of $\bar\rho_\ell^{\ss}$)
for all $\ell\gg0$ and the formal bi-character of $\bG_\ell$ and $\uG_\ell$ coincide for all $\ell\gg0$ \cite[Theorem A]{Hu15}. The works \cite{HL16},\cite{HL20}, and \cite{Hu22}
are all based on \cite{Hu15}.
Inspired by some group theoretic techniques developed in \cite{HL20}, 
we constructed \emph{algebraic envelopes of subrepresentations} (see \cite[$\mathsection3$]{Hu22})
of Serre compatible systems satisfying certain local conditions (originated from motivic
compatible system \eqref{sc3}) to prove the following big images results.
Denote by $\bar\epsilon_\ell$ the mod $\ell$ cyclotomic character.

\begin{thm}\label{general}\cite[Theorems 3.12(i),(ii),(iii),(v), and Theorem 1.2]{Hu22}
Let $\{\rho_\lambda\}_{\lambda}$ be a semisimple Serre compatible system of $K$ defined over $E$. 
Suppose there exist some integers $N_1,N_2\geq 0$ and finite extension $K'/K$ such that the following conditions hold.
\begin{enumerate}[(a)]
\item (Bounded tame inertia weights): for almost all $\lambda$ 
and each finite place $v$ of $K$ above $\ell$, 
the tame inertia weights of the local representation 
$(\bar\rho_{\lambda}^{\ss}\otimes\bar\epsilon_\ell^{N_1})|_{\Gal_{K_v}}$ belong to $[0,N_2]$.
\item (Potential semistability): for almost all $\lambda$ and each finite place $w$ of $K'$ not above $\ell$,
the semisimplification of the local representation $\bar\rho_{\lambda}^{\ss}|_{\Gal_{K_{w}'}}$ is unramified.
\end{enumerate}
Then there exists a finite Galois extension $L/K$ such that for almost all $\lambda$
and for each subrepresentation $\sigma_\lambda:\Gal_K\to \GL(W_\lambda)$ of $\rho_\lambda\otimes\overline\Q_\ell$,
the following assertions hold.
\begin{enumerate}[(i)]
\item There is a connected reductive subgroup $\uG_{W_{\lambda}}\subset\GL_{\overline W_\lambda^{\ss}}$ (called the algebraic envelope of $\sigma_\lambda$)
that is  semisimple on $\overline W_\lambda^{\ss}$ (semisimplified reduction of $W_\lambda$) and contains $\bar\sigma_\lambda^{\ss}(\Gal_L)$.
\item The commutants of $\bar\sigma_\lambda^{\ss}(\Gal_L)$ and $\uG_{W_{\lambda}}$ 
(resp. $[\bar\sigma_\lambda^{\ss}(\Gal_L),\bar\sigma_\lambda^{\ss}(\Gal_L)]$ and $\uG_{W_{\lambda}}^{\der}$) in $\End(\overline W_\lambda^{\ss})$ are equal.
In particular, $\bar\sigma_\lambda^{\ss}(\Gal_L)$ (resp. $[\bar\sigma_\lambda^{\ss}(\Gal_L),\bar\sigma_\lambda^{\ss}(\Gal_L)]$) is irreducible on $\overline W_{\lambda}^{\ss}$ if and only if 
$\uG_{W_{\lambda}}$ (resp. $\uG_{W_{\lambda}}^{\der}$) is irreducible on $\overline W_{\lambda}^{\ss}$.
\item The algebraic envelope $\uG_{W_{\lambda}}$ and algebraic monodromy $\bG_{W_\lambda}$ of  $\sigma_\lambda$
 have the same formal bi-character. Moreover, this formal bi-character has
finitely many possibilities depending on $\{\rho_\lambda\}_{\lambda}$.
\item If $\sigma_\lambda$ is Lie-irreducible\footnote{It means $\sigma_\lambda|_{\Gal_{M}}$
is irreducible for all finite extensions $M/K$, or equivalently, 
the identity component $\bG_{W_\lambda}^\circ$ is irreducible on $W_\lambda$.} and of type A (see  $\mathsection1.2$), 
then $\uG_{W_{\lambda}}$ 
and $\Gal_{K^{ab}}$ are irreducible on $\overline W_{\lambda}^{\ss}$,
where $K^{ab}/K$ is the maximal abelian extension.
\item If $\sigma_\lambda$ is irreducible and of type A, then it is residually irreducible.
\end{enumerate}
\end{thm}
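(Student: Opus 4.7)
The core plan is to bootstrap from the algebraic envelope construction of \cite{Hu15} applied to the ambient system $\{\rho_\lambda\}_\lambda$. Under hypotheses (a) and (b), that work produces, for all $\ell \gg 0$, a connected reductive subgroup $\uG_\ell \subset \GL_{n,\F_\ell}$ containing $\bar\rho_\lambda^{\ss}(\Gal_L)$ with uniformly bounded index (for a fixed finite Galois extension $L/K$), and having the same formal bi-character as $\bG_\lambda \subset \GL_{n,\overline{E}_\lambda}$. The task is to descend this construction to every subrepresentation $\sigma_\lambda$ of $\rho_\lambda \otimes \overline\Q_\ell$.

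Since $\rho_\lambda$ is semisimple, each subrepresentation $W_\lambda$ is a direct summand; choosing invariant lattices in each summand compatibly makes $\overline W_\lambda^{\ss}$ a direct summand of $\overline V_\lambda^{\ss}$. I would define $\uG_{W_\lambda} \subset \GL(\overline W_\lambda^{\ss})$ as the image of $\uG_\ell$ under the projection onto this summand. As a quotient of a connected reductive group, $\uG_{W_\lambda}$ is connected reductive, and it visibly contains the image of $\bar\rho_\lambda^{\ss}(\Gal_L)$, which is $\bar\sigma_\lambda^{\ss}(\Gal_L)$; this yields (i). For (iii), the formal bi-character of $\bG_{W_\lambda}$ is obtained from that of $\bG_\lambda$ by the analogous projection, and the same projection applied to the bi-character of $\uG_\ell$ produces that of $\uG_{W_\lambda}$, so the two $\Z$-forms agree; finiteness of possibilities comes from the finitely many direct summands of $\rho_\lambda$. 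For (ii), uniform commensurability of $\bar\rho_\lambda^{\ss}(\Gal_L)$ inside $\uG_\ell(\F_\ell)$ passes to the projection, so $\bar\sigma_\lambda^{\ss}(\Gal_L)$ is Zariski-dense in $\uG_{W_\lambda}$ for $\ell \gg 0$, giving equality of commutants and of the corresponding derived pairs.

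Assertions (iv) and (v) are then group-theoretic consequences. If $\sigma_\lambda$ is Lie-irreducible of type A, then $\bG_{W_\lambda}^{\der}$ is, up to isogeny, a product of $\SL_{k_i}$'s whose action on $W_\lambda$ is an external tensor product of standard representations — a shape pinned down by the formal bi-character. Transferring this shape to $\uG_{W_\lambda}^{\der}$ in residue characteristic $\ell \gg 0$ yields irreducibility on $\overline W_\lambda^{\ss}$; replacing $\Gal_L$ by $\Gal_{K^{ab}}$ only adjusts the centre, so the derived part still acts irreducibly. For (v), if $\sigma_\lambda$ is irreducible and of type A but not Lie-irreducible, one descends along a finite Galois cover to a Lie-irreducible piece, applies (iv), and reassembles via induction; Brauer–Nesbitt then transfers irreducibility from $\bar\sigma_\lambda^{\ss}$ to $\bar\sigma_\lambda$.

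The main obstacle is verifying that the projection $\uG_\ell \to \uG_{W_\lambda}$ produces an envelope that remains uniformly commensurable with $\bar\sigma_\lambda^{\ss}(\Gal_L)$ independently of $\lambda$ and of the chosen summand. This requires controlling the interaction between lattice choices in different summands and the failure of semisimplification to commute with reduction; hypothesis (a) on bounded tame inertia weights enters here to invoke the Fontaine–Laffaille-type bounds underpinning \cite{Hu15}, while hypothesis (b) on potential semistability away from $\ell$ is what guarantees the ambient envelope exists at all. Once this uniformity is secured, the rest of the argument is a matching of formal bi-characters between characteristic zero and characteristic $\ell$, together with the classification of type A reductive subgroups of $\GL_N$ acting irreducibly.
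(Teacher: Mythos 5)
Note first that this paper does not prove Theorem \ref{general} at all: it is quoted from \cite[Theorems 3.12, 1.2]{Hu22}, whose proof constructs \emph{algebraic envelopes of subrepresentations} building on \cite{Hu15} and the group-theoretic methods of \cite{HL20}; even the ambient envelope under hypotheses (a),(b) is an extension, carried out in \cite{Hu22}, of the motivic case treated in \cite{Hu15}. Your skeleton (ambient envelope, descent to each summand, type A group theory) matches that architecture, but the step you yourself call ``the main obstacle'' is precisely the mathematical content of the theorem, and your sketch does not close it. Defining $\uG_{W_\lambda}$ as the image of $\uG_\ell$ under projection to $\GL(\overline W_\lambda^{\ss})$ is plausible (the ambient commutant equality makes a chosen summand $\overline W_\lambda^{\ss}$ stable under $\uG_\ell$), but the key claim in (iii) --- that this image has the same formal bi-character as $\bG_{W_\lambda}$ --- does not follow from equality of the \emph{ambient} bi-characters of $\uG_\ell$ and $\bG_\lambda$. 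That equality is an abstract conjugacy of weight data in $\GL_n$; it does not say that the weights supported on the characteristic-zero summand $W_\lambda$ correspond, under this identification, to the weights supported on its reduction $\overline W_\lambda^{\ss}$ (with ambient formal character $\{x,1/x,y,1/y\}$, nothing you state prevents the char-$\ell$ summand from carrying $\{x,y\}$ while $W_\lambda$ carries $\{x,1/x\}$). Moreover $\sigma_\lambda$ is a subrepresentation over $\overline\Q_\ell$ defined only at the single prime $\lambda$, so it carries no compatible-system structure of its own and the Frobenius-torus arguments underlying \cite{Hu13},\cite{Hu15} cannot simply be rerun on it; securing the bi-character match and the uniform commensurability for \emph{every} subrepresentation, uniformly in $\lambda$, is exactly what \cite[$\mathsection3$]{Hu22} accomplishes and what your proposal leaves as an acknowledged gap.

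The group theory invoked for (iv) and (v) is also not correct as stated. A Lie-irreducible type A representation need not be an external tensor product of standard representations of $\SL_{k_i}$'s: case (a) of the table in the proof of Theorem \ref{mt1}(iv), namely $\Sym^3$ of the standard representation of $\SL_2$ inside $\GL_4$, is already a counterexample, so ``transferring this shape'' does not by itself give irreducibility of $\uG_{W_\lambda}$ or $\uG_{W_\lambda}^{\der}$ on $\overline W_\lambda^{\ss}$; the actual role of the type A hypothesis is finer. For (v), after passing to a Lie-irreducible piece over a finite extension and writing $\sigma_\lambda$ as an induction, residual irreducibility of the induced representation is not automatic from residual irreducibility of the piece: conjugates that are pairwise non-isomorphic in characteristic zero may become isomorphic mod $\ell$, and ruling this out is precisely why (iv) is formulated with irreducibility under $\Gal_{K^{ab}}$ --- a statement your argument for (v) never uses (your closing appeal to Brauer--Nesbitt is vacuous, since irreducibility of $\bar\sigma_\lambda^{\ss}$ trivially gives irreducibility of the reduction). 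In sum, the proposal reproduces the intended strategy in outline, but the two decisive steps --- the descent of bi-characters and commensurability to subrepresentations, and the type A irreducibility transfer together with its induction step --- remain unproved.
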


\subsection{Potential automorphy of low dimensional subrepresentations}
The following results will be useful in next section.

\begin{prop}\label{use1} 
Let $\{\rho_\lambda:\Gal_K\to\GL_n(E_\lambda)\}_\lambda$ be a strictly (and also Serre) compatible system of $K$ defined over $E$.
Then it satisfies the conditions (a) and (b) of Theorem \ref{general}.
\end{prop}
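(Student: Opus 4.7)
The plan is to verify conditions (a) and (b) separately, drawing on the defining properties of a strictly compatible system (Definition \ref{scs}(b),(c),(d)) together with standard $p$-adic Hodge theory and Grothendieck's $\ell$-adic monodromy theorem.

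For condition (a) on bounded tame inertia weights: since the exceptional set $S$ of Definition \ref{scs} is finite, for all but finitely many $\lambda$ (of residue characteristic $\ell$) every place $v$ of $K$ above $\ell$ satisfies $v\notin S$, so $\rho_\lambda|_{\Gal_{K_v}}$ is crystalline by \ref{scs}(b). By \ref{scs}(c) the set of $\tau$-Hodge-Tate numbers of $\rho_\lambda$ at $v$ is $\lambda$-independent, and hence contained in a fixed bounded interval of integers (independent of $\lambda$, of $v$, and of the chosen embedding $\tau$). For $\ell$ large compared to the length of this interval, Fontaine-Laffaille theory \cite{FL82} applies and relates the tame inertia weights of $\bar\rho_\lambda^{\ss}|_{\Gal_{K_v}}$ directly to these Hodge-Tate numbers. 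Choosing $N_1$ to be (the absolute value of) the smallest Hodge-Tate number, with the sign dictated by the convention for $\bar\epsilon_\ell$, and $N_2$ the length of the Hodge-Tate interval, the tame inertia weights of $(\bar\rho_\lambda^{\ss}\otimes\bar\epsilon_\ell^{N_1})|_{\Gal_{K_v}}$ lie in $[0, N_2]$, as required.

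For condition (b) on potential semistability: Grothendieck's $\ell$-adic monodromy theorem shows that for each finite place $v$ of $K$ not above $\ell$, the restriction $\rho_\lambda|_{I_{K_v}}$ to inertia is quasi-unipotent. A priori the finite extension needed to unipotize the action depends on $\lambda$, but here strict compatibility \ref{scs}(d) is crucial: the Frobenius-semisimplified Weil-Deligne representation at $v$ is $\lambda$-independent, hence so is the finite image of inertia through which it factors. This allows us to choose a single finite Galois extension $K'/K$ such that, for every place $w$ of $K'$ not above $\ell$ and every $\lambda$ with $w\nmid\ell$, the inertia $I_{K'_w}$ acts unipotently on the representation space $V_\lambda$. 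A unipotent action in characteristic zero reduces to a unipotent action modulo $\ell$, and any unipotent action becomes trivial after semisimplifying as a Galois representation; hence $\bar\rho_\lambda^{\ss}|_{\Gal_{K'_w}}$ is unramified.

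The essential subtlety, and what I expect to be the most delicate step, is the uniform choice of $K'$ in condition (b). Grothendieck's theorem by itself gives only a $\lambda$-dependent extension, and weak compatibility (which controls only Frobenius characteristic polynomials at places of good reduction) would be insufficient to bound these extensions uniformly. It is precisely the $\lambda$-independence of the full Weil-Deligne datum in \ref{scs}(d), at every finite place $v\nmid\ell$, that makes a single extension $K'$ work simultaneously for all $\lambda$. Once this uniform $K'$ is in hand, both (a) and (b) reduce to routine reduction-modulo-$\ell$ and semisimplification arguments with explicit integer constants $N_1,N_2$ read off from the Hodge-Tate interval.
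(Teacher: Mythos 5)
Your argument is correct and is essentially the same verification the paper invokes: the paper's proof simply defers to \cite[Theorem 4.1]{Hu22}, which checks (a) via crystallinity, the $\lambda$-independent Hodge--Tate numbers and Fontaine--Laffaille bounds on tame inertia weights, and (b) via the $\lambda$-independence of the Weil--Deligne representations at the finitely many ramified places $v\nmid\ell$ to choose a uniform $K'$, exactly as you do. The only points left implicit in your write-up (that $\ell$ is unramified in $K$ for almost all $\lambda$ so Fontaine--Laffaille applies, and the choice of a Galois-stable lattice before reducing) are standard and harmless.
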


\begin{proof}
Verification of conditions (a) and (b) are the same as \cite[Theorem 4.1]{Hu22} using the strict compatibility conditions.\end{proof}

\begin{prop}\label{use2}
Let $\{\rho_\lambda:\Gal_F\to\GL_n(E_\lambda)\}_\lambda$ be a strictly (and also Serre) 
compatible system of a totally real field $F$ defined over $E$. For almost all $\lambda$, 
if $\sigma_\lambda:\Gal_F\to\GL(W_\lambda)$ is an $m$-dimensional subrepresentation
of $\rho_\lambda\otimes\overline{E}_\lambda$ such that the 
$\tau$-Hodge-Tate numbers of $\sigma_\lambda$ are distinct and the algebraic monodromy group $\bG_{W_\lambda}$ is one of the following cases:
\begin{enumerate}[(a)]
\item $m=2$ and $\bG_{W_\lambda}^{\der}=\SL_2$;
\item $m=3$ and $\bG_{W_\lambda}^{\der}=\SO_3$,
\end{enumerate}
then there is a totally real extension $F'/F$ such that $\sigma_\lambda|_{\Gal_{F'}}$
is automorphic (i.e., attached to a regular algebraic, (polarized) cuspidal automorphic representation of $\GL_m(\A_{F'})$). 
Moreover, $\sigma_\lambda$ is part of a strictly (pure) compatible system of $F$.
\end{prop}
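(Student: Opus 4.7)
My plan is to verify the four hypotheses of the potential automorphy theorem (Theorem \ref{pauto}) for $\sigma_\lambda$ itself when $\ell$ is sufficiently large; the conclusion of that theorem then directly gives both assertions of the proposition. Condition (1) is inherited from $\rho_\lambda$, and condition (3) holds for $\ell \gg 0$ by the strict compatibility conditions \ref{scs}(b),(c), the distinct-$\tau$-Hodge-Tate-number assumption on $\sigma_\lambda$, and \cite[Lemma 1.4.3(2)]{BLGGT14}, exactly as in the discussion preceding this proposition in the excerpt.

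To verify condition (4), I would apply Theorem \ref{general}(iv) to $\sigma_\lambda$ viewed as a subrepresentation of $\rho_\lambda \otimes \overline{E}_\lambda$; Proposition \ref{use1} supplies hypotheses (a),(b) of Theorem \ref{general}. In both cases $\sigma_\lambda$ is Lie-irreducible (the identity component of $\bG_{W_\lambda}$ contains $\SL_2$ or $\SO_3$, each acting irreducibly on its standard representation) and of type A (the simple factors $\mathfrak{sl}_2$ and $\mathfrak{so}_3$ are both of type $A_1$). Theorem \ref{general}(iv) then yields, for almost all $\lambda$, the irreducibility of $\bar\sigma_\lambda^{\ss}|_{\Gal_{F^{\mathrm{ab}}}}$, and since $F(\zeta_\ell) \subset F^{\mathrm{ab}}$, this is condition (4).

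Condition (2) splits into two cases. In case (a) I would invoke Calegari's Proposition \ref{odd}: hypotheses (1),(3),(4) of that proposition hold for $\ell \gg 0$ as noted in the excerpt, leaving only residual irreducibility of $\Sym^2 \sigma_\lambda|_{\Gal_{F(\zeta_\ell)}}$. Since $\sigma_\lambda$ is a direct summand of $\rho_\lambda \otimes \overline{E}_\lambda$ by semisimplicity, $\Sym^2 \sigma_\lambda$ is a subrepresentation of the strictly compatible system $\{\Sym^2 \rho_\lambda\}_\lambda$ (which inherits strictness from $\{\rho_\lambda\}_\lambda$), whose derived algebraic monodromy is $\SO_3$ acting irreducibly, again of type $A_1$; Theorem \ref{general}(iv) applied to this auxiliary system provides the desired residual irreducibility on $\Gal_{F^{\mathrm{ab}}}$, and Proposition \ref{odd} then yields oddness of the symplectic similitude. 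In case (b) the orthogonal self-duality $\sigma_\lambda \cong \sigma_\lambda^\vee \otimes \mu_\lambda$ with $m=3$ gives, upon taking determinants, $\det(\sigma_\lambda)^2 = \mu_\lambda^3$; evaluating at a complex conjugation $c$ and using $\det(\sigma_\lambda)(c) = \pm 1$ together with $\mu_\lambda(c)^2 = 1$ forces $\mu_\lambda(c) = 1$, so the orthogonal similitude is totally even, matching the $\GO_m$ alternative of Theorem \ref{pauto}(2). The most delicate point, which I view as the main obstacle, is the bookkeeping verification that strict (and Serre) compatibility transports through $\Sym^2$ so that Proposition \ref{use1} and Theorem \ref{general} genuinely apply to $\Sym^2\sigma_\lambda$ as a subrepresentation of $\{\Sym^2\rho_\lambda\}_\lambda$; once this is checked, Theorem \ref{pauto} produces the totally real $F'/F$ and embeds $\sigma_\lambda$ in a strictly pure compatible system of $F$, completing the proof.
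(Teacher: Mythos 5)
Your proposal is correct and takes essentially the same route as the paper: reduce to Theorem \ref{pauto}, obtain conditions (1),(3) from strict compatibility and distinct $\tau$-Hodge-Tate numbers, get condition (4) and Proposition \ref{odd}(2) from Proposition \ref{use1} together with Theorem \ref{general}(iv), and settle condition (2) via Proposition \ref{odd} in case (a) and evenness of the similitude in case (b). The only small differences are expository: for case (b) the paper quotes the fact that the normalizer of $\SO_3$ in $\GL_3$ is $\GO_3$ (which is what guarantees the orthogonal self-duality you use before your determinant computation, so you should state it) together with \cite[Lemma 2.1]{CG13}, and it applies Theorem \ref{general}(iv) directly to $\{\rho_\lambda\}_\lambda$ rather than spelling out the passage to the auxiliary system $\{\Sym^2\rho_\lambda\}_\lambda$, which is the routine verification you flag.
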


\begin{proof}
It suffices to apply Theorem \ref{pauto} on $\sigma_\lambda$ for almost all $\lambda$.
By the facts that $\{\rho_\lambda\}_\lambda$ is strictly compatible and $\sigma_\lambda$ has distinct $\tau$-Hodge-Tate numbers,
conditions \ref{pauto}(1),(3) (resp. conditions \ref{odd}(1),(3),(4)) hold for $\sigma_\lambda$ (resp. when $m=2$) 
for almost all $\lambda$ (see $\mathsection2.2$).
Since $\bG_{W_\lambda}^{\der}$ (of type A) is irreducible on $W_\lambda$,
Proposition \ref{use1} and Theorem \ref{general}(iv) imply that 
condition  \ref{pauto}(4) (resp. \ref{odd}(2)) holds for $\sigma_\lambda$ (resp. when $m=2$) for almost all $\lambda$.

Therefore, it remains to check condition \ref{pauto}(2) (odd essentially self-dual) 
in each case for almost all $\lambda$, i.e., the image of $\sigma_\lambda$ is contained in 
some $\GSp_m$ (resp. $\GO_m$) with totally odd (resp. totally even) similitude character.
Case (a) follows from Proposition \ref{odd}. Case (b) follows from the facts that the normalizer 
of $\SO_3$ in $\GL_3$ is $\GO_3$ and the similitude character is totally even \cite[Lemma 2.1]{CG13}.
\end{proof}

\section{Proofs of the results}
By Proposition \ref{use}, we may assume $\{\rho_\lambda\}_\lambda$ is also a 
Serre compatible system defined over $E$, i.e., the image of $\rho_\lambda$ is contained in $\GL_n(E_\lambda)$.

\subsection{Proof of Theorem \ref{mt1}}
It is trivial when $n=1$. We consider $n=2,3,4$ separately.\\

\textbf{(i) and (ii).} 
When $n=2$, there are two possibilities for the irreducible $\rho_{\lambda_0}$. 
If $\rho_{\lambda_0}$ is Lie-irreducible, then $\bG_{\lambda_0}^{\der}=\SL_2$. 
If $\rho_{\lambda_0}$ is not Lie-irreducible, then the distinct $\tau$-Hodge-Tate numbers 
and Clifford's theorem \cite{Cl37} imply that
$\rho_{\lambda_0}$ is induced from a character, i.e., $\rho_{\lambda_0}=\mathrm{Ind}_K^F \chi_{\lambda_0}$ where $[K:F]=2$.
In the first case, Theorem \ref{hui1} implies that $\bG_\lambda^{\der}=\SL_2$ and $\rho_\lambda$ is irreducible for all $\lambda$.
In the second case, since $\rho_{\lambda_0}$ and thus $\chi_{\lambda_0}$ are Hodge-Tate at all places above $\ell$ (condition \ref{scs}(ii)), 
$\chi_{\lambda_0}$ and thus $\rho_{\lambda_0}=\mathrm{Ind}_K^F \chi_{\lambda_0}$ can be extended to 
Serre compatible systems $\{\chi_{\lambda}\}_\lambda$ and $\{\mathrm{Ind}_K^F \chi_{\lambda}\}_\lambda$ (see e.g., \cite[Chapter III]{Se98}) after enlarging $E$.
Since the $\tau$-Hodge-Tate numbers are distinct for all $\lambda$ (condition \ref{scs}(iii)), 
$\rho_\lambda=\mathrm{Ind}_K^F \chi_{\lambda}$ is irreducible for all $\lambda$ by 
Mackey's irreducibility criterion \cite[$\mathsection7.4$]{Se77}.

When $n=3$, $\rho_{\lambda_0}$ is induced from a character if it is not Lie-irreducible and the treatment is identical to $n=2$ by using the regularity condition 
and Mackey's irreducibility criterion.
Hence, it suffices to consider Lie-irreducible $\rho_{\lambda_0}$.
There are two cases, either $\bG_{\lambda_0}^{\der}=\SL_3$ (of rank $2$) or 
$\bG_{\lambda_0}^{\der}=\SO_3$ (of rank $1$).
In the first case, we have $\bG_{\lambda}^{\der}=\SL_3$ (the maximal connected semisimple subgroup of $\GL_3$) for all $\lambda$ because 
the rank of $\bG_{\lambda}^{\der}$ is independent of $\lambda$ by Theorem \ref{hui1}.
In the second case, suppose $\rho_\lambda$ is reducible for some $\lambda$.
Since the formal character of $\bG_\lambda^{\der}$ is equal to those of $\bG_{\lambda_0}^{\der}=\SO_3$ (Theorem \ref{hui1})
which is 
\begin{equation}
\begin{pmatrix}
      x & 0 & 0 \\
       0& 1 &0 \\
			0&0& 1/x
\end{pmatrix},
\end{equation}
we obtain an irreducible decomposition
\begin{equation}\label{decom1}
\rho_\lambda=W_\lambda\oplus W_\lambda'
\end{equation}
such that  $(\dim W_\lambda,\dim W_\lambda')=(2,1)$ and 
 $\bG_{W_\lambda}^{\der}=\SL_2$ if $\bG_{W_\lambda}$ denotes the algebraic monodromy of $W_\lambda$.
If there are infinitely many $\lambda$ such that $\rho_\lambda$ is reducible, 
then Proposition \ref{use2} (case (a))
implies that $W_\lambda$ is part of a two dimensional strictly compatible system for some $\lambda$.
This, together with the fact that the character $W_\lambda'$ is part of a compatible system,
contradict that $\rho_{\lambda_0}$ is irreducible. 
It follows that $\rho_\lambda$ is irreducible and $\bG_\lambda^{\der}=\SO_3$ for almost all $\lambda$.
Hence, Proposition \ref{use2} (case (b)) implies that 
for some $\lambda$ and some totally real extension $F'/F$, $\rho_\lambda|_{\Gal_{F'}}$ is 
attached to a regular algebraic, polarized, cuspidal automorphic representation of $\GL_3(\A_{F'})$.
It follows that $\rho_\lambda(\Gal_{F'})\subset\GO_3$ for all $\lambda$ \cite[$\mathsection2$]{BLGGT14}.
We conclude that for all $\lambda$, $\bG_\lambda^{\der}=\SO_3$ (Theorem \ref{hui1}) and thus $\rho_\lambda$ is irreducible.

Finally, the $\lambda$-independence of $\bG_\lambda^{\der}$ established above together with Theorem \ref{hui1} imply that the identity component $\bG_\lambda^\circ$ is independent of $\lambda$.
\qed\\

\textbf{(iii).} Since $n\leq 3$, the representation $\rho_\lambda$ is of type A for every $\lambda$.
The assertion follows directly from  Proposition \ref{use1}, Theorem \ref{mt1}(i),
and Theorem \ref{general}(v) since $\{\rho_\lambda\}_\lambda$
is a strictly (and Serre) compatible system.\qed\\

\textbf{(iv).} When $n=4$, if $\rho_{\lambda_0}$ is not Lie-irreducible then again it must be induced by regularity (see \cite[Proposition 3.4.1]{Pat19}). 
Then if it is induced from a character, the treatment is identical to the above.
Otherwise,  $\rho_{\lambda_0}=\mathrm{Ind}_K^F \phi_{\lambda_0}$ where $[K:F]=2$ and 
$\phi_{\lambda_0}$ is two dimensional (not induced from a character)
and moreover if $\bG_{\phi_{\lambda_0}}$ is the algebraic monodromy of $\phi_{\lambda_0}$, 
the distinct $\tau$-Hodge-Tate numbers condition forces
the formal character of $\bG_{\phi_{\lambda_0}}^{\der}$ to be 
\begin{equation}\label{2fc}
\begin{pmatrix}
      x & 0  \\
			0& 1/x
\end{pmatrix}.
\end{equation}
If $\rho_\lambda$ is reducible for some $\lambda$, 
Theorem \ref{hui1} and \eqref{2fc}  imply that
the irreducible decomposition of such $\rho_\lambda$ is
\begin{equation}\label{decom2}
\rho_\lambda=W_\lambda\oplus W_\lambda'
\end{equation}
such that  $(\dim W_\lambda,\dim W_\lambda')=(2,2)$ and 
 $\bG_{W_\lambda}^{\der}=\SL_2=\bG_{W_\lambda'}^{\der}$ if 
$\bG_{W_\lambda}$ (resp. $\bG_{W_\lambda'}$) denotes the algebraic monodromy of $W_\lambda$ (resp. $W_\lambda'$).
By twisting $\{\rho_\lambda\}_\lambda$ with a big power of 
the system $\{\epsilon_\ell\}_\ell$ of $\ell$-adic cyclotomic characters,
we may assume $\bG_{W_\lambda}=\GL_2$.
As $\rho_{\lambda_0}=\mathrm{Ind}_K^F \phi_{\lambda_0}$, 
we obtain 
$\rho_{\lambda_0}\cong\rho_{\lambda_0}\otimes\eta$
where $\eta:\Gal_F\to\Gal(K/F)\to \{\pm 1\}$ is the non-trivial character.
By compatibility, we also obtain 
$$\rho_{\lambda}\cong\rho_{\lambda}\otimes\eta$$
for all $\lambda$. Since $\bG_{W_\lambda}=\GL_2$, the representations
$W_\lambda$ and $W_\lambda\otimes\eta$ are not isomorphic and
we deduce that $W_\lambda\cong W'_\lambda\otimes\eta$.
It follows that $W_\lambda|_{\Gal_K}\cong W_\lambda'|_{\Gal_K}$ but this contradicts the distinct $\tau$-Hodge-Tate numbers condition.
We conclude that $\rho_\lambda$ is irreducible for all $\lambda$.

It remains to consider $n=4$ and that $\rho_{\lambda_0}$ is Lie-irreducible.
We list the four cases: 
\begin{center}
\begin{tabular}{|l|l|c|c|} \hline
Case  & $(\bG_{\lambda_0}^{\der}, \rho_{\lambda_0})$ & Rank & Formal character\\ \hline
$(a)$ & $(\SL_2,\mathrm{Sym}^3(\mathrm{std}))$ 
& $1$ & $\tiny{\begin{pmatrix}
      x^2 & 0 &0 &0 \\
			0& x& 0& 0\\
			0&0 & 1/x&0 \\
		0	&0&0& 1/x^2
\end{pmatrix}}$ \\ \hline
$(b)$ &  $(\SO_4,\mathrm{std})$& $2$ &
$\tiny{\begin{pmatrix}
      x & 0 & 0& 0\\
			0& 1/x &0 & 0\\
			0&0 & y&0 \\
			0&0&0& 1/y
\end{pmatrix}}$ \\ \hline
$(c)$  &  $(\Sp_4,\mathrm{std})$  
& $2$ & $\tiny{\begin{pmatrix}
      x &  0& 0&0 \\
		0	& 1/x &0 & 0\\
			0& 0& y& 0\\
			0&0&0&1/y
\end{pmatrix}}$  \\ \hline
$(d)$ & $(\SL_4,\mathrm{std})$
& $3$ & $\tiny{\begin{pmatrix}
      x &0  & 0&0 \\
		0	& y&0 &0 \\
		0	& 0& z& 0\\
			0&0&0&1/(xyz)
\end{pmatrix}}$ \\ \hline
\end{tabular}
\end{center}
\vspace{.1in}
where ``$\text{std}$'' stands for standard representation.
For cases (a) and (d), the irreducibility of $\rho_\lambda$ for all $\lambda$ follows easily from 
the formal character of $\bG_{\lambda_0}^{\der}$ and Theorem \ref{hui1}.
For cases (b) and (c), the formal characters of $\bG_{\lambda_0}^{\der}$ are identical (given on the list).
If $\rho_\lambda$ is reducible for some $\lambda$, 
Theorem \ref{hui1} and the formal character imply that the only possible
 irreducible decomposition of $\rho_\lambda$ is
\begin{equation}\label{decom3}
\rho_\lambda=W_\lambda\oplus W_\lambda'
\end{equation}
such that  $(\dim W_\lambda,\dim W_\lambda')=(2,2)$ and 
 $\bG_{W_\lambda}^{\der}=\SL_2=\bG_{W_\lambda'}^{\der}$ if 
$\bG_{W_\lambda}$ (resp. $\bG_{W_\lambda'}$) denotes the algebraic monodromy of $W_\lambda$ (resp. $W_\lambda'$).
If there are infinitely many $\lambda$ such that $\rho_\lambda$ is reducible, 
then Proposition \ref{use2} (case (a))
implies that 
$W_\lambda$ (resp. $W_\lambda'$) is part of a two dimensional strictly compatible system for some $\lambda$.
This contradicts that $\rho_{\lambda_0}$ is irreducible. 
It follows that $\rho_\lambda$ is irreducible for almost all $\lambda$.
\qed

\subsection{Proof of Theorem \ref{mt2}}
By twisting $\{\rho_\lambda\}_\lambda$ with a big power of 
the system of $\ell$-adic cyclotomic characters, we assume  $\bG_\lambda$ is connected
for all $\lambda$ \cite{Se81} and $\bG_{\lambda_0}=\GSp_4$.\\

\textbf{(i).} Suppose $\rho_{\lambda_0}$ is fully symplectic for some $\lambda_0$.  
Theorem \ref{mt1}(iv) and the information on formal character (Theorem \ref{hui1}) 
imply that for almost all $\lambda$,
the algebraic monodromy $\bG_\lambda$ is either $\GSp_4$ or $\GO_4^\circ$.
In the $\GO_4^\circ$-case, the representation $\rho_\lambda\otimes\rho_\lambda^\vee$ 
has a three dimensional irreducible factor $W_\lambda$ with algebraic monodromy $\SO_3$ because 
of the short exact sequence
\begin{equation}\label{ses}
1\to \GL_1\to \GL_2\times\GL_2\to \GO_4^\circ\to 1
\end{equation}
given by the exterior tensor of the standard representation of $\GL_2$ with itself.
The factor $W_\lambda$ has distinct Hodge-Tate numbers.
Indeed, if $\ell$ is the residue characteristic of $\lambda$, \cite[Corollary 3.2.12]{Pat19} asserts that
there is a Hodge-Tate lift of $\rho_\lambda:\Gal_{\Q_\ell}\to \GO_4^\circ(\overline{E}_\lambda)$ to 
$$f_\lambda\oplus f_\lambda': \Gal_{\Q_\ell}\to\GL_2(\overline{E}_\lambda)\times\GL_2(\overline{E}_\lambda).$$
Since $\rho_\lambda|_{\Gal_{\Q_\ell}}=f_\lambda\otimes f_\lambda'$ has distinct Hodge-Tate numbers, 
both the two dimensional $f_\lambda$ and $f_\lambda'$
have distinct Hodge-Tate weights. Hence, $W_\lambda|_{\Gal_{\Q_\ell}}=\text{ad}^0 f_\lambda$ (or $\text{ad}^0 f_\lambda'$)
also has distinct Hodge-Tate weights. 

If there is an infinite set $\mathcal{L}$ of $\lambda$ such that $\bG_\lambda=\GO_4^\circ$,
then Proposition \ref{use2} (case (b) applied to the system $\{\rho_\lambda\otimes\rho_\lambda^\vee\}_\lambda$)
 asserts that for some $\lambda_1\in\mathcal{L}$, the three dimensional $W_{\lambda_1}$ is part of a strictly compatible system $\{\phi_\lambda\}_\lambda$.
The algebraic monodromy of $\phi_\lambda$ is equal to $\SO_3$ for all $\lambda$ 
by Theorem \ref{mt1}(ii).
Consider the compatible system 
$$\{\phi_\lambda\oplus(\rho_\lambda\otimes\rho_\lambda^\vee)\}_\lambda$$
 and let $\bH_\lambda$ be the algebraic monodromy at $\lambda$.
By construction, the semisimple ranks of both $\bH_{\lambda_1}$ and $\bG_{\lambda_1}$ are equal to $2$.
By $\bG_{\lambda_0}=\GSp_4$ and Goursat's lemma, 
the semisimple rank of $\bH_{\lambda_0}$ is equal to $3$ which is 
greater than $2$, the semisimple rank of $\bH_{\lambda_1}$.
This contradicts Theorem \ref{hui1}. \qed\\

\textbf{(ii).} 
 Theorem \ref{mt2}(i) implies that  
there is a Hodge-Tate character $\mu_\lambda$ such that
\begin{equation}\label{equal}
\rho_\lambda\cong\rho_\lambda^\vee\otimes \mu_\lambda
\end{equation} 
for almost all $\lambda$.
When $\bG_\lambda=\GSp_4$, the trace of $\rho_\lambda(Frob_p)\neq0$ for a Dirichlet density one set of rational primes $p$.
Hence by \eqref{equal} and the compatibility (\ref{scs}(a)) of $\{\rho_\lambda\}_\lambda$ (resp. $\{\rho_\lambda\}_\lambda^\vee$),
for almost all $\lambda$ the similitude characters $\mu_\lambda$ and $\mu_{\lambda_0}$ are compatible,
i.e., for almost all rational primes $p$ both characters are unramified and $\mu_\lambda(Frob_p)=\mu_{\lambda_0}(Frob_p)\in E$.
Since  $\mu_{\lambda_0}$ is odd, $\mu_\lambda$ is also odd for almost all $\lambda$. 
We break down the proof into three steps:
\begin{enumerate}
\item[(ii-1)] prove that $\rho_\lambda$ is residually irreducible for almost all $\lambda$;
\item[(ii-2)] prove that $\{\rho_\lambda\}_\lambda$ is potentially automorphic;
\item[(ii-3)] prove that $\bar\rho_\lambda(\Gal_\Q)$ has a subgroup conjugate to $\Sp_4(\F_\ell)$
for almost all $\lambda$.\\
\end{enumerate}

(ii-1). By Proposition \ref{use1}, there exist algebraic envelopes 
$\uG_\lambda$ (Theorem \ref{general}(i)) of $\rho_\lambda$ for almost all $\lambda$.
Then Theorem \ref{general}(iii) and the construction of $\uG_\lambda$ (\cite[Proposition 3.9(iii)]{Hu22})
imply that $\uG_\lambda$ is either $\GSp_4$ or the first group in
\begin{equation}\label{redu}
\mathbb{G}_m(\SL_2\times\SL_2)\subset\GL_2\times\GL_2\subset\GL_4\hspace{.2in}\text{(reducible action)},
\end{equation} 
where $\mathbb{G}_m$ denotes the set of scalars in $\GL_4$.
 If $\rho_\lambda$ is residually reducible 
then $\uG_\lambda\neq \GSp_4$ by Theorem \ref{general}(ii).
Thus, $\uG_\lambda$ has to be $\mathbb{G}_m(\SL_2\times\SL_2)$ in \eqref{redu} and the semisimplified reduction $\bar\rho_\lambda^{\ss}$
decomposes into a sum of two two-dimensional absolutely irreducible representations corresponding to \eqref{redu},
\begin{equation}\label{split}
\bar\rho_\lambda^{\ss}=\overline W_\lambda\oplus\overline {W'}_\lambda.
\end{equation}  
By $\bar\rho_\lambda^{\ss}\cong \bar\rho_\lambda^{\vee,\ss}\otimes\bar\mu_\lambda$ (semisimplified reduction of \eqref{equal}) 
and Theorem \ref{general}(ii),
we obtain 
\begin{equation}\label{polar}
\overline W_\lambda\cong \overline W_\lambda^\vee\otimes \bar\mu_\lambda\hspace{.2in}\text{and}\hspace{.2in}
\overline {W'}_\lambda\cong \overline {W'}_\lambda^\vee\otimes \bar\mu_\lambda.
\end{equation}
Since $\overline W_\lambda$ is not induced, 
it follows that $\text{det}\overline W_\lambda= \bar\mu_\lambda= \text{det}\overline{W'}_\lambda$ is odd.

Let $\epsilon_\ell$ (resp. $\bar\epsilon_\ell$) be the $\ell$-adic (resp. mod $\ell$) cyclotomic character.
Suppose there are infinitely many $\lambda$ such that $\bar\rho_\lambda^{\ss}$ is not absolutely irreducible (i.e., \eqref{split} holds).
Then the two-dimensional Galois representation $\overline W_\lambda$ of $\Q$ is odd irreducible.
We now follow the arguments in \cite[$\mathsection4.5.1$]{Hu22} (also \cite{Die02a,Die02b}) 
using Serre's modularity conjecture \cite{Se87}. By applying (strong form) Serre's modularity conjecture (proved in \cite{KW09a,KW09b}),
there exist an integer $m$ and a cuspidal Hecke eigenform $f$ such that if
\begin{equation}\{\psi_{\lambda,f}:\Gal_\Q\to \GL_2(E_\lambda)\}_\lambda
\end{equation}
is the Serre compatible system attached to $f$ (by enlarging $E$ if necessary),
then the semisimplified reduction satisfies
\begin{equation}
\bar\psi_{\lambda,f}^{\ss}\cong \overline W_\lambda\otimes\bar\epsilon_\ell^m
\end{equation}
for infinitely many $\lambda$\footnote{Let $f_\lambda$ be the modular form attached to $\overline W_\lambda$ by Serre's recipe (see \cite[$\mathsection2$]{Da95}). The point is that, the crystallinity (by \ref{scs}(b)) and $\lambda$-independence of $\tau$-Hodge-Tate numbers (by \ref{scs}(c)) give control on the weight of $f_\lambda$ and the strong local-global compatibility (by \ref{scs}(a,d)) gives control on the level of $f_\lambda$. These data allow us to find an integer $m$ and a modular form $f$ attached to 
$\overline W_\lambda\otimes\epsilon_\ell^m$ for infinitely many $\lambda$.}. 
Since $[\Gal_\Q,\Gal_\Q]$ is (absolutely) irreducible on $\overline W_\lambda$
by \eqref{redu} and Theorem \ref{general}(ii), the algebraic monodromy group of $\psi_{\lambda,f}$ contains $\SL_2$ (for all $\lambda$).
 Consider the $6$-dimensional Serre compatible system (satisfying the conditions (a) and (b) 
of Theorem \ref{general})
given by direct sum:
\begin{equation}\label{sum}
\{U_\lambda:=(\rho_{\lambda}\otimes\epsilon_\ell^m)\oplus\psi_{\lambda,f}\}_\lambda.
\end{equation}
Let $\bM_\lambda$ be the algebraic monodromy group (resp. $\uM_\lambda$ be the 
algebraic envelope) of $U_\lambda$ at $\lambda$ (resp. for almost all $\lambda$). 
Theorem \ref{general}(iii) implies that 
the semisimple groups $\bM_{\lambda}^{\der}$ and 
$\uM_{\lambda}^{\der}$
have the same formal bi-character on respectively $U_\lambda$ and
\begin{equation}\label{sum}
\overline U_\lambda^{\ss}:=(\bar{\rho}_{\lambda}^{\ss}\otimes\bar\epsilon_\ell^m)\oplus\bar{\psi}_{\lambda,f}^{\ss}
\end{equation}
for almost all $\lambda$.
This is impossible since for infinitely many $\lambda$
we have $\bM_{\lambda}^{\der}=\Sp_4\times\SL_2$ (by Goursat's lemma)
and $\uM_{\lambda}^{\der}\cong\uG_\lambda^{\der}=\SL_2\times\SL_2$ 
(as the second factor of \eqref{sum} is a subrepresentation of the first factor)
are of different ranks. We conclude that $\rho_\lambda$ is residually irreducible
for almost all $\lambda$. \\

(ii-2). Recall that the algebraic envelope $\uG_\lambda$ is either $\GSp_4$ or $\mathbb{G}_m(\SL_2\times\SL_2)$ in \eqref{redu} for almost all $\lambda$. 
If $\uG_\lambda$ is $\mathbb{G}_m(\SL_2\times\SL_2)$ in \eqref{redu} 
for infinitely many $\lambda$, then except finitely many such $\lambda$ the
restriction $\bar\rho_\lambda|_{\Gal_{L}}$ is a sum of two non-isomorphic two dimensional irreducible representations for some finite Galois extension $L/\Q$ 
by Theorem \ref{general}(i) and (ii). Hence, it follows by \cite{Cl37} that 
$\bar\rho_\lambda$ is induced from a two dimensional representation of a 
field $Q_\lambda$ such that $[Q_\lambda:\Q]=2$ and $Q_\lambda\subset L$\footnote{The field $Q_\lambda$ corresponds to the index two subgroup 
$\bar\rho_\lambda(\Gal_\Q)\cap (\GL_2\times\GL_2)$.}
 for infinitely many $\lambda$. Hence, we can fix $Q_\lambda$ equal to
a  degree two extension $Q$
of $\Q$ for infinitely many $\lambda$. This implies
the existence of a half Dirichlet density set of rational primes $p$ 
such that $\rho_\lambda(Frob_p)$ has zero trace. Since this contradicts
$\bG_{\lambda_0}=\GSp_4$ by the compatibility condition \ref{scs}(a), this case is impossible.

If $\uG_\lambda$ is $\GSp_4$ for almost all $\lambda$, then 
the commutants of $\Gal_{\Q(\zeta_\ell)}$ and $\Sp_4$ in $\End(\overline \F_\ell)$
are equal for almost all $\lambda$ by Theorem \ref{general}(ii). 
This implies that the condition \ref{pauto}(4) holds 
(and thus $\bar\rho_\lambda^{\ss}$ is absolutely irreducible) for almost all $\lambda$. Since the conditions \ref{pauto}(1)--(3)
also hold for almost all $\rho_\lambda$ by the strict compatibility
of the system, Theorem \ref{mt2}(i), and the oddness of similitude character $\mu_\lambda$,
we obtain by Theorem \ref{pauto} that
the representation $\rho_\lambda$ (and thus the system $\{\rho_{\lambda}\}_\lambda$) is potentially automorphic.\\

(ii-3). It remains to prove that $\bar\rho_\lambda(\Gal_\Q)$ contains a group isomorphic to $\Sp_4(\F_\ell)$ for almost all $\lambda$.
Since the algebraic envelope $\uG_\lambda$ is $\GSp_4$ for almost all $\lambda$
and $\uG_\lambda$ is the image in the $\lambda$-component of the algebraic envelope $\uG_\ell\subset\GL_{4[E:\Q],\F_\ell}$ 
of 
$$\rho_\ell:=\bigoplus_{\lambda|\ell}\rho_\lambda:\Gal_\Q\to\prod_{\lambda|\ell}\GL_4(E_\lambda)
=(\mathrm{Res}_{E/\Q}\GL_4)(\Q_\ell)\subset\GL_{4[E:\Q]}(\Q_\ell)$$
given by the restriction of scalars \cite[$\mathsection3.4$]{Hu22}, it follows that
 the universal cover $\uG_\ell^{\sc}$ of the derived group of $\uG_\ell$ satisfies\footnote{The isomorphism \eqref{isomsp} follows from \cite[$\mathsection1.3$]{Kn67}
and the fact that $\Sp_{4,\F_q}$ has only one $\F_q$-form up to isomorphism.}
\begin{equation}\label{isomsp}
\uG_\ell^{\sc}\cong \prod_i \mathrm{Res}_{\F_{q_i}/\F_\ell}\Sp_{4,\F_{q_i}}
\end{equation}
for all sufficiently large $\ell$, where $q_i$ is a power of $\ell$. Since for $\ell\gg0$ the index 
$$[\uG_\ell^{\der}(\F_\ell):\bar\rho_\ell^{\ss}(\Gal_\Q)\cap \uG_\ell^{\der}(\F_\ell)]\leq C$$
for some $C>0$ independent of $\ell$ by \cite[Theorem 2.11(ii)]{Hu22}
and $\uG_\ell^{\sc}(\F_\ell)$ is generated by elements of order $\ell$ \cite[Theorem 12.4]{St68},
it follows that
$\bar\rho_\ell^{\ss}(\Gal_\Q)$
contains the image of $\uG_\ell^{\sc}(\F_\ell)$ in $\uG_\ell^{\der}(\F_\ell)$ for $\ell\gg0$.
Therefore, \eqref{isomsp} and the fact that the representation
$$\prod_j\Sp_{4,\overline\F_\ell}\cong\uG_\ell^{\sc}\times\overline\F_\ell\to \uG_\ell^{\der}\times\overline\F_\ell\twoheadrightarrow 
\uG_\lambda^{\der}=\Sp_{4,\overline\F_\ell}\subset\GL_{4,\overline\F_\ell}$$
is given by projection (as very automorphism of $\Sp_{4,\overline\F_\ell}$ is inner) to one of the $\Sp_4$-factors 
imply that the image of the absolutely irreducible $\bar\rho_\lambda$ 
 contains a subgroup conjugate to $\Sp_4(\F_{\ell})$
for almost all $\lambda$.
\qed

\subsection{Proof of Corollary \ref{mc}}
The proof we give here is similar to \cite[Theorem 8]{HL16}.
Suppose $\ell$ is sufficiently large.
By Theorem \ref{mt2}(i), we assume 
the image $\Gamma_\ell$ (resp. $[\Gamma_\ell,\Gamma_\ell]$) is 
a subgroup of $\GSp_4(\Q_\ell)$ (resp. $\Sp_4(\Q_\ell)$).
Let $\Delta_\ell\subset\Sp_4(\Q_\ell)$ be a maximal compact subgroup
containing $[\Gamma_\ell,\Gamma_\ell]$. There is an affine smooth group scheme $\cH_\ell/\Z_\ell$
such that the generic fiber $\cH_\ell\times\Q_\ell=\Sp_{4,\Q_\ell}$ and $\cH_\ell(\Z_\ell)=\Delta_\ell$ by Bruhat-Tits theory \cite[3.4.3]{Ti79}. Since $\Sp_{4,\Q_\ell}$ is simply-connected,
the special fiber $\cH_\ell\times\F_\ell$ is connected \cite[3.5.3]{Ti79}.
By $[\Gamma_\ell,\Gamma_\ell]\subset\Delta_\ell$, the big image part of
Theorem \ref{mt2}(ii), and the facts that $\Sp_4(\F_\ell)$ is perfect for $\ell>2$
and the kernel of the reduction map $r_\ell:\cH_\ell(\Z_\ell)\to\cH_\ell(\F_\ell)$ is pro-$\ell$, 
the composition series of the image $r_\ell([\Gamma_\ell,\Gamma_\ell])$ contains 
the finite simple group of Lie type
\begin{equation}\label{2gp}
\Sp_4(\F_\ell)/\{\pm 1\}.
\end{equation}

Since the \emph{$\ell$-dimension} of $\Sp_4(\F_\ell)/\{\pm 1\}$ is 
$\dim\Sp_{4,\F_\ell}$ (see \cite[$\mathsection2$]{HL16} for definition)
which is equal to $\dim(\cH_\ell\times\F_\ell)$ by the smoothness of $\cH_\ell/\Z_\ell$, 
it follows from \cite[Corollary 6(iv)]{HL16} that $\cH_\ell\times\F_\ell$ is semisimple.
Hence, $\cH_\ell$ is a semisimple group scheme over $\Z_\ell$.
Moreover, the (simply-connected) special fiber is $\Sp_{4,\F_\ell}$ by footnote $9$ 
and $\Delta_\ell=\cH_\ell(\Z_\ell)$ is a hyperspecial maximal compact subgroup 
of $\Sp_4(\Q_\ell)$ (after \cite[Remark 7.2.13]{Co14}) 
isomorphic to $\Sp_4(\Z_\ell)$ \cite[2.5]{Ti79}.
Since \eqref{2gp} is a composition factor of $r_\ell([\Gamma_\ell,\Gamma_\ell])$
and $\Sp_4(\F_\ell)$ does not have an index two subgroup, 
we obtain
$$r_\ell([\Gamma_\ell,\Gamma_\ell])=\Sp_4(\F_\ell)=\cH_\ell(\F_\ell).$$
 It follows from \cite[Theorem 1.3]{Va03} that $[\Gamma_\ell,\Gamma_\ell]=\cH_\ell(\Z_\ell)=\Delta_\ell$ 
for $\ell\gg0$. We are done 
because the compact subgroup $\Gamma_\ell^{\sc}\subset\Sp_4(\Q_\ell)$ contains
the maximal compact subgroup $[\Gamma_\ell,\Gamma_\ell]=\Delta_\ell\cong\Sp_4(\Z_\ell)$.
\qed

\vspace{.1in}
\end{document}